\documentclass[a4paper,12pt]{amsart}
\usepackage{amssymb,amsxtra,paralist,mathrsfs,upgreek,xcolor}
\usepackage[colorlinks=true,linkcolor=magenta,citecolor=cyan,urlcolor=olive]{hyperref}
\usepackage{cleveref}
\usepackage[normalem]{ulem}

\usepackage{amssymb,amsxtra,paralist,mathrsfs,upgreek,xcolor}
\usepackage[colorlinks=true,linkcolor=magenta,citecolor=cyan,urlcolor=olive]{hyperref}
\usepackage{cleveref}
\usepackage[normalem]{ulem}

\title[The weak-type $(1,1)$ estimates for wave equation]{The weak-type $(1,1)$ estimates for wave equation on $ax+b$ groups}
\author[Y. Wang and L. Yan]{Yunxiang Wang and Lixin Yan}
\address{Yunxiang Wang, Department of Mathematics, Sun Yat-sen University, Guangzhou, 510275, P. R. China}
\email{\href{mailto: wangyx677@mail.sysu.edu.cn}{wangyx677@mail.sysu.edu.cn}}
\address{Lixin Yan, Department of Mathematics, Sun Yat-sen University, Guangzhou, 510275, P. R. China}
\email{\href{mailto: mcsylx@mail.sysu.edu.cn}{mcsylx@mail.sysu.edu.cn}}

\date{\today}
\subjclass{43A85, 42B15, 35L05}

\keywords{Wave equation, weak-type $(1,1)$ estimate, distinguished Laplacian, $ax+b$ groups.}

\numberwithin{equation}{section}
\newtheorem{theorem}{Theorem}

\newtheorem{lemma}[theorem]{Lemma}

\newtheorem{proposition}[theorem]{Proposition}
\theoremstyle{definition}

\theoremstyle{remark}

\numberwithin{theorem}{section}

\newcommand{\R}{\mathbb{R}}
\newcommand{\e}{\mathrm{e}}
\newcommand{\Id}{\mathrm{Id}}

\newcommand{\supp}{\mathrm{supp}}

\newcommand{\arcosh}{\mathrm{arcosh}}

\newcommand{\BMO}{\mathrm{BMO}}

\newcommand{\dV}{\mathrm{d}V}
\newcommand{\bc}{\mathbf{c}}
\newcommand{\F}{\mathscr{F}}
\newcommand{\g}{\mathbf{g}}

\renewcommand{\SS}{\mathbb{S}}
\renewcommand{\i}{\mathrm{i}}
\renewcommand{\d}{\mathrm{d}}

\allowdisplaybreaks

\begin{document}
\begin{abstract}
Let $G$ be the group $\mathbb{R}_+\ltimes \mathbb{R}^n$ endowed with Riemannian symmetric space metric $d$ and the right Haar measure $\mathrm{d} \rho$ which is of $ax+b$ type, and $L$ be the positive definite distinguished left-invariant Laplacian on $G$. Let $u=u(t,\cdot)$ be the solution to $u_{tt}+Lu=0$ with initial conditions $u|_{t=0}=f$ and $u_t|_{t=0}=g$. In this article we show that for a fixed $t \in\mathbb{R}\setminus\{0\}$, 
\begin{align*}
\|u(t,\cdot)\|_{L^{1,\infty}(G,\mathrm{d}\rho)}\leq C\ (1+|t|)\big(\|(\operatorname{Id}+L)^{\alpha_0/2}f\|_{L^1(G,\mathrm{d}\rho)} 
+\|(\operatorname{Id}+L)^{\alpha_1/2}g\|_{L^1(G,\mathrm{d}\rho)}\big) 
\end{align*}
if and only if $\alpha_0\geq {n/2}$ and $\alpha_1\geq n/2-1$. Moreover, the polynomial-in-time growth $1+|t|$ in the above estimate is best possible.
\end{abstract}

\maketitle


\section{Introduction}\label{sec1}
We begin by recalling the definitions necessary for the statement of wave equation on the $ax+b$ group. Let $G$ be the Lie group $\R_+\ltimes \R^n$ endowed with the product: for all $(x,y), (x',y')\in G$,
\begin{align*}
(x,y)\cdot(x',y')=(x\,x',y+x\,y').
\end{align*}
The group $G$ is called the $(n+1)$-dimensional $ax+b$ group. Clearly, $e=(1, 0)$ is the identity of $G$. We endow $G$ with the left-invariant Riemannian metric $\g=x^{-2}(\d x^2 +\d y^2)$, and denote by $d$ the corresponding metric. This coincides with the metric on the $(n+1)$-dimensional hyperbolic space. The right Haar measure on $G$ is given by
\begin{align*}
\d \rho(x,y)=x^{-1}\,\d x\d y.
\end{align*}
The space $(G, d, \d \rho)$ is a solvable Lie group with exponential volume growth. Throughout the article, we will use the right Haar measure $\d \rho$ in notions such as $L^p(G)$ for all $1\leq p< \infty$.

A basis for the Lie algebra of left-invariant vector fields is given by
\begin{align}\label{s1e1}
X=x{\partial\over \partial x},\quad Y_j=x{\partial\over \partial y_j},\quad 1\leq j\leq n.
\end{align}
We define the distinguished left-invariant Laplacian to be the second order differential operator
\begin{align}\label{s1e2}
L=-X^2-\sum_{j=1}^nY_j^2.
\end{align}
It is essentially self-adjoint on $L^2(G)$, and thus we can use spectral calculus to define the operators $\psi(\sqrt{L})$ for Borel functions $\psi$ on $[0,\infty)$.
\smallskip

The main object we consider in this paper is the wave equation
\begin{align}\label{s1e3}
{\partial^2u\over\partial t^2}+Lu=0,\quad u|_{t=0}=f,\quad \left.{\partial u\over\partial t}\right|_{t=0}=g.
\end{align}
corresponding to $L$ on $G$. In 2000s M\"{u}ller and Thiele \cite{MuTh} showed that for $1\leq p\leq \infty$, $\alpha_0>n|1/p-1/2|$ and $\alpha_1>n|1/p-1/2|-1$, the solution $u=u(t,\cdot)$ to the wave equation \eqref{s1e3} satisfies
\begin{align}
\|u(t,\cdot)\|_{L^p(G)}\leq C_p\ \big(&(1+|t|)^{2|1/p-1/2|}\|(\Id+L)^{\alpha_0/2}f\|_{L^p(G)}\label{s1e3.1}\\
&\ \ +(1+|t|)\,\|(\Id+L)^{\alpha_1/2}g\|_{L^p(G)}\big).\notag
\end{align}
Recently, endpoint cases of estimate \eqref{s1e3.1} was obtained by the named authors of this paper \cite{WaYa}. Specifically, they showed that for $t\in\R\setminus\{0\}$ and $1<p<\infty$, the estimate \eqref{s1e3.1}
holds if and only if
\begin{align*}
\alpha_0\geq n\left|{1\over p}-{1\over 2}\right|\quad \text{and}\quad \alpha_1\geq n\left|{1\over p}-{1\over 2}\right|-1;
\end{align*}
while for $p=1$, the estimate
\begin{align}
\|u(t,\cdot)\|_{L^1(G)}\leq C\ (1+|t|)\big(&\|(\Id+L)^{\alpha_0/2}f\|_{H^1(G)}\label{s1e4} 
+\|(\Id+L)^{\alpha_1/2}g\|_{H^1(G)}\big) 
\end{align}
holds if and only if
\begin{align*}
\alpha_0\geq {n\over 2}\quad \text{and}\quad \alpha_1\geq {n\over2}-1.
\end{align*}
Here $H^1(G)$ stands for the Hardy space on the metric measure space $(G,d,\d\rho)$ (cf. \cite[Definition 2.2]{Va}).
\smallskip

The aim of this paper is to continue this line to establish the weak-type $(1,1)$ estimates for the solutions to the wave equation \eqref{s1e3}. Our main result is the following.

\begin{theorem}\label{thm1.1}
Let $t\in\R\setminus\{0\}$ and $u=u(t,(x,y))$ be the solution to equation \eqref{s1e3}. Then the following weak-type $(1,1)$ estimate
\begin{align}\label{s1e5}
\|u(t,\cdot)\|_{L^{1,\infty}(G)}\leq C\ (1+|t|)\big(&\|(\Id+L)^{\alpha_0/2}f\|_{L^1(G)} +\|(\Id+L)^{\alpha_1/2}g\|_{L^1(G)}\big) 
\end{align}
holds if and only if
\begin{align*}
\alpha_0\geq {n\over 2} \quad \mbox{and}\quad \alpha_1\geq {n\over 2}-1.
\end{align*}

Moreover, the polynomial-in-time growth $1+|t|$ in the above estimate is best possible.
\end{theorem}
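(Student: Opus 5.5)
The statement will be reduced to weak-type $(1,1)$ bounds for the two operators
\[
T_t^{0}=\cos(t\sqrt L)\,(\Id+L)^{-\alpha_0/2},\qquad T_t^{1}=\frac{\sin(t\sqrt L)}{\sqrt L}\,(\Id+L)^{-\alpha_1/2},
\]
since $u(t,\cdot)=\cos(t\sqrt L)f+\frac{\sin(t\sqrt L)}{\sqrt L}g$. Both operators are right convolutions with kernels $k_t$ on $G$. The tool is the classical link between the distinguished Laplacian and the Laplace--Beltrami operator $\Delta$ on hyperbolic space $\mathbb H^{n+1}$. With $\delta(x,y)=x^{n}$ the modular function, one has $\delta^{1/2}L\delta^{-1/2}=-\Delta-n^2/4$. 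Consequently the convolution kernel of $\psi(\sqrt L)$ equals $\delta^{1/2}$ times the radial kernel of $\psi\big(\sqrt{-\Delta-n^2/4}\big)$ on $\mathbb H^{n+1}$, evaluated at distance $r=d(e,(x,y))$. This turns the problem into precise pointwise bounds for the hyperbolic wave kernels, using the Abel/inverse-spherical-transform formulas already exploited in \cite{WaYa}.

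\textbf{Sufficiency.} Assume $\alpha_0=n/2$ and $\alpha_1=n/2-1$; larger exponents follow by composing with the $L^1$-bounded operators $(\Id+L)^{-\epsilon}$. Split $k_t=k_t\chi_{\{r\le 1\}}+k_t\chi_{\{r>1\}}$.

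For the local part, the metric is locally doubling and $k_t$ near the light cone $r=|t|$ behaves like the Euclidean kernel of $\cos(t\sqrt{-\Delta})(1-\Delta)^{-n/4}$ in dimension $n+1$. Weak $(1,1)$ at this critical order is the Seeger--Sogge--Stein / Seeger-type endpoint. I would prove it by a local Calderón--Zygmund decomposition on balls of radius $\le1$, which is available on $G$ via the Hebisch--Steger CZ theory, together with a dyadic frequency decomposition of the multiplier. The good part is handled through $L^2$ bounds. The bad part is handled by the usual $L^1$ estimates away from the exceptional set, which is enlarged by the $|t|$-neighbourhood of the light cone for each bad ball. The $L^2$ bound, the volume of that neighbourhood, and the overlap each contribute at most a factor $1+|t|$.

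For the global part, where $r>1$, I would use the hyperbolic kernel estimate $|K_t(r)|\lesssim (1+|t|)(1+r)e^{-nr/2}$, which holds uniformly outside a neighbourhood of $r=|t|$ by the smoothness of the multiplier at infinity. Near $r=|t|$ the singularity $|r-|t||^{-\beta}$ is integrable at the critical order. Multiplying by $\delta^{1/2}$, one must show that convolution with $\delta^{1/2}(x)\,(1+r)e^{-nr/2}\chi_{r>1}$ is weak $(1,1)$ on $G$, with constant $\lesssim 1+|t|$. This is exactly the type of kernel treated by Hebisch--Steger and Müller--Thiele. That kernel is not in $L^1$, but its non-integrable part lies in the region $x\gg1$, where it behaves like $x^{-n}\chi_{|y|\lesssim x}$. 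This piece is handled by a Hardy-type maximal argument in the variable $\log x$ combined with averaging in $y$, which gives weak $(1,1)$.

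I expect the main obstacle to be controlling this global part uniformly in $t$ with only linear growth. The difficulty is that the region near the sphere $r=|t|$ carries the weight $e^{n|t|/2}\cdot e^{-n|t|/2}$ times polynomial factors, and these must combine into exactly $1+|t|$.

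\textbf{Necessity.} Testing against an approximate identity $f=\phi_\epsilon$ supported near $e$ reduces the question to showing that $k_t\notin L^{1,\infty}$ when $\alpha_0<n/2$ (respectively $\alpha_1<n/2-1$). Near the light cone the kernel then blows up like $|r-|t||^{-(n/2-\alpha_0)-1/2}$ times a smooth nonvanishing factor. Its distribution function over the shell of volume comparable to $\sinh^n|t|$ is not weak-$L^1$, because the exponent exceeds $1$ after integration in $r$. The sharpness of $1+|t|$ comes from evaluating the same test functions for large $t$. The tail of $\delta^{1/2}K_t$ yields $|\{|k_t|>\lambda\}|\gtrsim |t|/\lambda$ for suitable $\lambda$, using the precise asymptotics $K_t(r)\sim c\,t\,e^{-nr/2}$ for $1\ll r<|t|/2$, which come from the pole structure of the Harish-Chandra $\mathbf c$-function at the bottom of the spectrum.
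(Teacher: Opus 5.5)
\textbf{Sufficiency.} The decisive step fails. You claim that at the critical order the singularity at $r=|t|$ is integrable, and this is false. In dimension $d=n+1$ the kernel of $\cos(t\sqrt L)(\Id+L)^{-\alpha_0/2}$ behaves, transversally to the sphere $\{r=|t|\}$, like $|r-|t||^{-(n/2+1-\alpha_0)}$. This is order $-(d+1)/2+\alpha_0$; it can be checked with Lemma~\ref{prop4.1} and $|\bc(\lambda)|^{-2}\approx\lambda^n$. So at $\alpha_0=n/2$ it is a non-integrable $|r-|t||^{-1}$ singularity. If it were integrable, the operator would be bounded on $L^1(G)$ and there would be no endpoint phenomenon at all.

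For $|t|>1$ this singularity lies entirely in your ``global part'', where you only use size bounds, so the real difficulty is never addressed. Your Calder\'on--Zygmund argument meets the light cone only when $|t|\le 1$. Even there, ``$L^1$ estimates off an exceptional set'' cannot suffice: the $r_Q$-neighbourhoods of the spheres $\{d(\cdot,c_Q)=|t|\}$ have total measure $\sim\sum_Q r_Q|t|^n$, which is not $\lesssim\|f\|_1/\alpha$. This is why \cite{Ta2} needs an additional $L^2$ argument for the bad part.

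The off-cone reduction fails too. After multiplication by $\delta^{1/2}$, your majorant $(1+r)\e^{-nr/2}$ is $\approx x^{-n}\log x$ on $\{x\gg1,\ |y|\lesssim x\}$; you dropped the factor $1+r\approx\log x$. Then $\lambda\,\rho(\{x^{-n}\log x>\lambda\})\sim\log(1/\lambda)\to\infty$. A positive kernel outside $L^{1,\infty}(G)$ cannot give a weak type $(1,1)$ convolution operator. In fact, off the cone the kernel is $\e^{-nr/2}$ times a rapidly decreasing function of $r-|t|$, because $1/\bc$ is smooth on $\R$. That gives an $L^1$ kernel of norm $\lesssim 1+|t|$.

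The idea you are missing is the paper's factorization, which avoids any Calder\'on--Zygmund analysis of the wave kernel. By Lemma~\ref{prop4.1}, at high frequencies
$\cos(t\lambda)=c_1(\sinh t)^{n/2}\lambda^{n/2}\varphi_\lambda(t)+c_2(\sinh t)^{n/2}\lambda^{n/2-1}\varphi'_\lambda(t)+\mathfrak r_t(\lambda)$.
The main term is therefore an $S^0$ multiplier, of weak type $(1,1)$ by Lemma~\ref{lem2.6}, composed on the right with the spherical mean $\varphi_{\sqrt L}(t)$. By Lemmas~\ref{lem4.1} and~\ref{lem2.1}, that spherical mean has $L^1$ operator norm $\le C(1+t)\e^{-nt/2}$. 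This is exactly how the balance $\e^{n|t|/2}\e^{-n|t|/2}$ you anticipated produces the factor $1+|t|$. The weak-type factor must act last, so $\varphi'_{\sqrt L}(t)$ is rewritten as $A_t^*+X^*B^*_{0,t}+\sum_jY_j^*B^*_{j,t}$ and Proposition~\ref{thm3.2} is applied. The terms $\mathfrak r_t$ and the low frequencies are bounded on $L^1$ with norm $\lesssim1+t$ by \cite{MuTh,WaYa}.

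\textbf{Necessity and sharpness.} In your necessity argument the exponent should be $-(n/2-\alpha_0)-1$, not $-(n/2-\alpha_0)-1/2$. With yours, the kernel would be integrable near the cone for $n/2-1/2<\alpha_0<n/2$, and the test would prove nothing there. With the corrected exponent, kernel testing is viable; the paper instead interpolates with $L^2$ and invokes the $L^p$ necessity of \cite[Theorem 1.1]{WaYa}.

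The sharpness argument does not work. First, there is no tail $K_t(r)\sim c\,t\,\e^{-nr/2}$ inside the cone. For $n=2$, the kernel $\kappa_t$ of $\sin(t\sqrt{\Delta_n})/\sqrt{\Delta_n}$ is $(4\pi\sinh t)^{-1}$ times surface measure on $\{r=t\}$. In general, $\sin(t\lambda)/(\lambda\,\bc(-\lambda))$ is smooth at $\lambda=0$ because $1/\bc(-\lambda)$ vanishes there, so the pole of $\bc$ produces no such term.

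Second, and more fundamentally, approximate identities only detect $\|k_t\|_{L^{1,\infty}(G)}$, and this does not grow with $t$. Take $n=2$, where $k_t=\delta^{1/2}\kappa_t$. The map $\omega\mapsto x(t,\omega)$ pushes $\d\omega$ forward to $2\pi(\sinh t)^{-1}x^{-2}\,\d x$ on $[\e^{-t},\e^t]$. Hence the $\rho$-mass of $k_t$ equals $t$, but it is spread like $\tfrac12\,\d x/x$. Its regularizations therefore have weak quasi-norm bounded uniformly in $t$.

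The growth is only visible for inputs spread over the same range of $s=\log x$. That is the role of the paper's $q_t$, which is supported in a slab of length $\sim t$ in $s$ and satisfies $\|(\Id+L)^\beta q_t\|_{L^1}\lesssim 1+t$ for all $\beta$ (Lemma~\ref{lem6.1}). For this input, unit propagation speed and $L1=0$ force $\sin(t\sqrt L)/\sqrt L\,(q_t)\equiv t$ on a set of $\rho$-measure $\sim 1+t$ (Proposition~\ref{prop4.6}).
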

The Euclidean counterpart of Theorem \ref{thm1.1} is given by Tao \cite{Ta2}. Notably, although $G$ is of exponential volume growth, one can still obtain polynomial-in-time growth estimate \eqref{s1e5}. Furthermore, in contrasts to the corresponding estimates in the Euclidean settings \cite{Mi,Pe} as well as endpoint estimates for the wave equation corresponding to the shifted Laplace--Beltrami operators on $G$ \cite{Io}, the growth rate in \eqref{s1e5} is dimension-free and is best possible. 

\smallskip

To prove the sufficiency part of Theorem \ref{thm1.1}, we follow the approach in \cite{WaYa}, which is inspired by \cite{Pe} and \cite{Ta2}. Roughly speaking, we will factorize the wave propagator $(\Id+L)^{-n/4}\e^{\i t\sqrt{L}}$ into an operator given by convolution with some weighted measure on the sphere of radius $|t|$, and a multiplier of order $0$. The proof relies heavily on the explicit asymptotic expansions of the spherical functions $\varphi_\lambda(t)$ on $G$, the spherical Fourier transform of the normalized measure on the sphere of radius $|t|$ as well as its derivative $\varphi_\lambda'(t)$ on $t$. In addition, the necessity part of Theorem \ref{thm1.1} follows directly from the one of \cite[Theorem 1.1]{WaYa}. For the sharpness of the polynomial growth $1+|t|$ in \eqref{s1e5}, we construct a function $q_t$ and use the unit speed propagation property as well as the uniqueness of the solution to show that for all $\beta_0,\beta_1\in\R$,
if 
\begin{align*}
\|u(t,\cdot)\|_{L^{1,\infty}(G)}\leq C(t)\,\big(\|(\Id+L)^{\beta_0}f\|_{L^1(G)}+\|(\Id+L)^{\beta_1}g\|_{L^1(G)}\big)
\end{align*}
holds for $(f,g)=(q_t,0)$ and $(f,g)=(0,q_t)$, then we must have $C(t)\geq C$ and $C(t)\geq C|t|$, respectively. 

\smallskip

The paper is organized as follows. In Section \ref{sec2} we present basic facts about the Fourier theory as well as the singular integral theory on $G$. In Section \ref{sec3} we establish one proposition, which plays an important role in the proof of the sufficiency part of Theorem \ref{thm1.1}. The proof of our main result, Theorem \ref{thm1.1}, will given in 
Section \ref{sec5}.

\bigskip


\section{Preliminaries}\label{sec2}
In this section we review the analysis on the $ax+b$ group $G=\R_+\ltimes \R^n$. For the detail, we refer the reader to \cite{Br, Da, Gr, HeSt, Pe2, Va}.
\subsection{\texorpdfstring{$G$}{G} as the hyperbolic space}
The space $G$ endowed with the left-invariant Riemannian metric given by
\begin{align}\label{riem}
\g=x^{-2}(\d x^2+\d y^2)
\end{align}
is the $(n+1)$-dimensional hyperbolic space. The geodesic distance $d$ is a left-invariant distance on $G$. For all $(x,y)$ in $G$
\begin{align}\label{e2.2}
d((x,y),e)=\arcosh {x^2+1+|y|^2\over 2x},
\end{align}
where $e=(1,0)$ is the identity on $G$ and $|\cdot|$ is the $n$-dimensional Euclidean norm. For simplicity in the sequel we denote $R(x,y)=d((x,y),e)$ for $(x,y)\in G$. The measure $\dV$ induced by the Riemannian metric $\g$ is a left Haar measure on $G$ and is given by
\begin{align*}
\dV(x,y)=x^{-(n+1)}\,\d x\d y_1\dots\d y_n.
\end{align*}
In particular,
\begin{align}\label{e2.4}
\dV (x,y)=\delta(x,y)\,\d \rho(x,y),
\end{align}
where the modular function is given by 
\begin{eqnarray}\label{mm}
\delta(x,y)=x^{-n}.
\end{eqnarray}
\smallskip

Following \cite[p. 21 and p. 136]{Pe2} and \cite[Exercise 3.18]{Gr}, we can adopt polar coordinates 
\begin{align}\label{polcor}
(x,y)=g(r,\omega)=(x(r,\omega),y(r,\omega)), \quad (r,\omega)\in [0,\infty)\times \SS^n,
\end{align}
where $r=R(x,y)\geq 0$, $\omega=(\omega',\omega_{n+1})\in \SS^n\subset \R^{1+n}$,
\begin{align}\label{epol}
x(r,\omega)={1\over(\omega_{n+1}+\coth r)\, \sinh r}\quad \mbox{and} \quad y(r,\omega)={\omega'\over \omega_{n+1}+\coth r},
\end{align}

We now introduce a simple lemma that will be repeatedly used throughout this article.
\begin{lemma}[{\cite[Lemma 2.1]{WaYa}}]\label{lem2.1}
Suppose $r>0$. Then
\begin{align*}
\int_{\SS^n}\delta^{-1/2}(g(r,\omega))\,\d\omega+\int_{\SS^n}\Big|\partial_r[\delta^{-1/2}(g(r,\omega))]\Big|\,\d\omega\leq C\,(1+r)\,\e^{-nr/2}.
\end{align*}
\end{lemma}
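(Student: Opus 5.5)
The plan is to compute the integrand explicitly in terms of the single coordinate $\omega_{n+1}$, reduce both integrals to a one-dimensional integral over $[-1,1]$, and estimate that integral by splitting near the endpoint $\omega_{n+1}=-1$. By \eqref{mm} and \eqref{epol},
\begin{align*}
\delta^{-1/2}(g(r,\omega))=x(r,\omega)^{n/2}=\big(\cosh r+\omega_{n+1}\sinh r\big)^{-n/2},
\end{align*}
since $(\omega_{n+1}+\coth r)\sinh r=\cosh r+\omega_{n+1}\sinh r$. The integrand therefore depends only on $s=\omega_{n+1}\in[-1,1]$. Note that $\cosh r+s\sinh r\geq e^{-r}>0$ there.

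\emph{The derivative term.} This reduces to the first term by a pointwise bound. We have
\begin{align*}
\partial_r\big[x^{n/2}\big]=-\tfrac n2\,(\sinh r+s\cosh r)(\cosh r+s\sinh r)^{-n/2-1}.
\end{align*}
Moreover,
\begin{align*}
(\cosh r+s\sinh r)^2-(\sinh r+s\cosh r)^2=(1-s^2)(\cosh^2r-\sinh^2r)=1-s^2\geq0 .
\end{align*}
Since $\cosh r+s\sinh r>0$, this gives $|\sinh r+s\cosh r|\leq \cosh r+s\sinh r$. Hence $|\partial_r[\delta^{-1/2}(g(r,\omega))]|\leq \frac n2\,\delta^{-1/2}(g(r,\omega))$. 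It therefore suffices to bound the first integral by $C(1+r)e^{-nr/2}$.

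\emph{The main term.} Integrating in $s$ (for $n\geq1$ the pushforward of $\d\omega$ under $\omega\mapsto\omega_{n+1}$ is $c_n(1-s^2)^{n/2-1}\d s$), we get
\begin{align*}
I(r)=c_n\int_{-1}^1(\cosh r+s\sinh r)^{-n/2}(1-s^2)^{n/2-1}\,\d s .
\end{align*}
For $r\leq1$ the integrand is bounded by a constant times $(1-s^2)^{n/2-1}$, which is integrable, so $I(r)\leq C$.

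For $r>1$, write $u=1+s\in[0,2]$. Then
\begin{align*}
\cosh r+s\sinh r=\tfrac12\big(e^{r}u+e^{-r}(2-u)\big)\geq \tfrac12\max\{e^ru,\;e^{-r}\}\quad\text{for } u\leq1 .
\end{align*}
On $u\in[1,2]$ the base is $\gtrsim e^r$. There the weight $(1-s^2)^{n/2-1}\approx(2-u)^{n/2-1}$ is integrable, so this part contributes $O(e^{-nr/2})$. On $u\in[0,1]$ the weight is $\approx u^{n/2-1}$, and I split at $u=e^{-2r}$:
\begin{itemize}
\item On $[0,e^{-2r}]$, the integrand is at most $Ce^{nr/2}u^{n/2-1}$. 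This part contributes $\lesssim e^{nr/2}e^{-nr}=e^{-nr/2}$.
\item On $[e^{-2r},1]$, the integrand is at most $C(e^ru)^{-n/2}u^{n/2-1}=Ce^{-nr/2}u^{-1}$. This part contributes $\lesssim e^{-nr/2}\cdot 2r$.
\end{itemize}
Summing gives $I(r)\leq C(1+r)e^{-nr/2}$, and the lemma follows.

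The only delicate point is this logarithmic region $e^{-2r}\leq 1+s\leq1$. It produces exactly the factor $1+r$, which is sharp since $I(r)$ is essentially the spherical function $\varphi_0$. The derivative term needs no separate asymptotics thanks to the pointwise inequality above.
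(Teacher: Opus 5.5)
Your proof is correct and complete. Each step checks out:
\begin{itemize}
\item the reduction $\delta^{-1/2}(g(r,\omega))=(\cosh r+\omega_{n+1}\sinh r)^{-n/2}$;
\item the identity $(\cosh r+s\sinh r)^2-(\sinh r+s\cosh r)^2=1-s^2$, which yields $|\partial_r[\delta^{-1/2}(g(r,\omega))]|\le \frac n2\,\delta^{-1/2}(g(r,\omega))$;
\item the reduction to $c_n\int_{-1}^1(\cosh r+s\sinh r)^{-n/2}(1-s^2)^{n/2-1}\,\d s$;
\item the three-region estimate, in which the piece $\e^{-2r}\le 1+s\le 1$ produces the factor $r$.
\end{itemize}

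The paper itself gives no proof of this lemma; it is imported from \cite[Lemma 2.1]{WaYa}. So there is no argument in this source to match yours against.

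For comparison, the quickest alternative for the first integral uses Harish-Chandra's integral formula. It gives $\int_{\SS^n}\delta^{-1/2}(g(r,\omega))\,\d\omega=\nu_n\varphi_0(r)$, and one then quotes the classical estimate $\varphi_0(r)\asymp(1+r)\e^{-nr/2}$. That route relies on external asymptotics. It also does not directly handle the second integral: because the absolute value sits inside, knowledge of $\varphi_0'$ only yields a lower bound.

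Your approach buys a fully elementary, self-contained proof in which the derivative term costs nothing. Your pointwise inequality is exactly the statement that $\log x$ is $1$-Lipschitz along the unit-speed geodesic $r\mapsto g(r,\omega)$. The paper uses the same observation in \eqref{e6.3}.

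Your side remark is also correct: the first integral is essentially $\varphi_0$, so the factor $1+r$ cannot be removed.
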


\subsection{Spherical Fourier analysis on \texorpdfstring{$G$}{G}}
For a radial function $\kappa$, one can define its spherical Fourier transform by
\begin{align}\label{e2.9}
\F(\kappa)(\lambda)=\nu_n\int_0^\infty \kappa(r)\, \varphi_\lambda(r)\,(\sinh r)^n\,\d r,
\end{align}
where $\nu_n$ is the surface area of the unit sphere $\SS^n$ and $\varphi_{\lambda}$ denotes the elementary spherical function given by
\begin{align}\label{e2.10}
\varphi_\lambda(t)={2^{n/2-1}\Gamma((n+1)/2)\over \sqrt{\uppi}\,\Gamma(n/2)} (\sinh t)^{1-n}\int_{-t}^t\e^{\i \lambda s}(\cosh t-\cosh s)^{n/2-1}\,\d s,
\end{align}
see \cite[(3.1.4)]{Br}. We also have the inverse formula of the above Fourier transform:
\begin{align*}
\kappa(r)={2^{n-2}\Gamma((n+1)/2\over\uppi^{(n+3)/2}}\int_0^\infty \F (\kappa)(\lambda)\,\varphi_\lambda(r)\,|\bc(\lambda)|^{-2}\,\d\lambda,
\end{align*}
where $\bc$ denotes the Harish-Chandra function given by
\begin{align}\label{e2.11}
\bc(\lambda)={2^{n-1}\Gamma((n+1)/2))\over\sqrt{\uppi}}{\Gamma(\i\lambda)\over \Gamma(n/2+\i\lambda)},
\end{align}
see \cite[(3.1.3)]{Br}.
\smallskip

Now we introduce the spectral multipliers on $G$. The distinguished Laplacian $L$ given by \eqref{s1e2} has a special relationship with the (positive definite) Laplace--Beltrami operator $\Delta $ on $G$. Indeed, in view of \cite[p. 177]{Da}, $\Delta$ has a continuous spectrum $[n^2/4,\infty)$ on $L^2(G,\dV)$. If we let $\Delta_n$ be the shifted Laplace--Beltrami operator $\Delta-n^2\Id/4$, it follows from \cite[Proposition 2]{As} that
\begin{align*}
L(f)(x,y)=\delta^{1/2}(x,y)\, \Delta_n(\delta^{-1/2} f)(x,y).
\end{align*}
This combined with the spectral theorem gives
\begin{align}\label{e3.3}
\psi(\sqrt{L})(f)(x,y)=\delta^{1/2}(x,y)\,\psi(\sqrt{\Delta_n})(\delta^{-1/2}f)(x,y).
\end{align}

Let $k_\psi$ and $\kappa_\psi$ denote the convolution kernel of $\psi(\sqrt{L})$ and $\psi(\sqrt{\Delta_n})$, respectively, i.e.,
\begin{align*}
\psi(\sqrt{L})(f)=f*k_\psi\quad\mbox{and}\quad \psi(\sqrt{\Delta_n})(f)=f*\kappa_\psi,
\end{align*}
where ``$*$'' denotes the convolution on $G$ defined by
\begin{align}\label{con}
f*k(g)=\int_G f(h)\,k(h^{-1} g)\,\delta(h)\,\d\rho(h).
\end{align}
Then in view of the following lemma, the Fourier transform of $\kappa_\psi$ is the multiplier function $\psi$.
\begin{lemma}\label{lem3.1}
Let $\psi$ be a Borel function on $\R_+$. Then $\kappa_\psi$ is radial and is given by
\begin{align*}
\kappa_\psi(x,y)={2^{n-2}\Gamma((n+1)/2)\over\uppi^{(n+3)/2}}\int_0^\infty \psi(\lambda)\,\varphi_\lambda(R(x,y))\,|\bc(\lambda)|^{-2}\,\d \lambda.
\end{align*}
\end{lemma}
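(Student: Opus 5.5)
The plan is to move to the Riemannian symmetric space picture and reduce the claim to the standard spherical (Helgason) Fourier analysis on $(n+1)$-dimensional hyperbolic space. Observe first that the convolution \eqref{con} is $f*k(g)=\int_G f(h)\,k(h^{-1}g)\,\dV(h)$ by \eqref{e2.4}. This is the usual convolution for the left Haar measure $\dV$, that is, the Riemannian volume of $G$ viewed as the hyperbolic space $\mathbb{H}^{n+1}=\mathrm{SO}_0(n+1,1)/\mathrm{SO}(n+1)$. Here $G$ acts simply transitively by left translations, which are isometries of $\g$.

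The first step is radiality. The Laplace--Beltrami operator $\Delta$, and hence $\Delta_n=\Delta-n^2\Id/4$, commutes with every isometry of $\mathbb{H}^{n+1}$. By the spectral theorem, so does $\psi(\sqrt{\Delta_n})$ for every bounded Borel $\psi$. Commuting with left translations of $G$ makes $\psi(\sqrt{\Delta_n})$ a right convolution operator $f\mapsto f*\kappa_\psi$, with $\kappa_\psi$ a priori a distribution. It also commutes with the isotropy group $K\cong \mathrm{SO}(n+1)$ of $e$. Writing the kernel as $\kappa_\psi(h^{-1}g)=\mathcal{K}(g,h)$ with $\mathcal{K}(\sigma g,\sigma h)=\mathcal{K}(g,h)$ for all isometries $\sigma$, we get $\kappa_\psi(k\cdot z)=\kappa_\psi(z)$ for $k\in K$. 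Here $k\cdot z$ denotes the rotation about $e$ in the polar coordinates \eqref{polcor}. Since $K$ acts transitively on each geodesic sphere about $e$, $\kappa_\psi$ depends only on $R(x,y)$.

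The second step is to identify the multiplier. The spherical function $\varphi_\lambda$ of \eqref{e2.10} is the unique radial eigenfunction of $\Delta$ with $\varphi_\lambda(0)=1$ and eigenvalue $\lambda^2+n^2/4$. Equivalently, $\Delta_n\varphi_\lambda=\lambda^2\varphi_\lambda$. For a radial (say compactly supported, smooth) kernel $\kappa$, the standard mean-value/Funk--Hecke argument gives $\varphi_\lambda*\kappa=\F(\kappa)(\lambda)\,\varphi_\lambda$, with $\F$ as in \eqref{e2.9}. More generally, the same identity holds on the Helgason plane waves, so convolution with $\kappa$ is diagonalized by the Helgason Fourier transform with multiplier $\F(\kappa)(\lambda)$. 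The Plancherel theorem for the Helgason transform, with Plancherel density $|\bc(\lambda)|^{-2}$ from \eqref{e2.11}, then shows that $\psi(\sqrt{\Delta_n})$ is the operator whose multiplier is $\psi(\lambda)$ for $\lambda\ge0$, since $\Delta_n$ acts as multiplication by $\lambda^2$. By uniqueness of the transform, $\F(\kappa_\psi)=\psi$. Applying the inverse spherical transform recorded after \eqref{e2.10} yields the stated formula for $\kappa_\psi$.

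The main obstacle is technical rather than conceptual: for a general Borel $\psi$ the kernel is only a distribution and the $\lambda$-integral need not converge absolutely. I would first prove the formula for $\psi$ bounded with compact support in $[0,\infty)$. In that case the integral converges absolutely, because $|\varphi_\lambda|\le1$ and $|\bc(\lambda)|^{-2}$ is locally integrable, and the Plancherel identity applies directly to $f\in C_c^\infty$. For general $\psi$, I would interpret the formula in the sense of distributions, namely tested against radial $C_c^\infty$ functions, by approximating $\psi$ with truncations $\psi\mathbf{1}_{[0,N]}$ and using strong convergence of $\psi_N(\sqrt{\Delta_n})$ on the relevant domain. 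This is how the lemma is used later, for $\psi$ of the form $(1+\lambda^2)^{-n/4}\e^{\i t\lambda}$ and their pieces.
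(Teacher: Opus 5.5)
Your proposal is correct and follows essentially the standard spherical-analysis argument: radiality from invariance under the isotropy group, diagonalization of radial convolutions by the Helgason transform, Plancherel with density $|\bc(\lambda)|^{-2}$, then spherical inversion. The paper does not spell this out and simply cites \cite[Section~4]{Br}. Your treatment of unbounded $\psi$, by truncating and testing against radial $C_c^\infty$ functions, is more careful than the lemma's own statement, but it needs $\psi$ to be polynomially bounded so that $\int_0^\infty\psi(\lambda)\,\F(\phi)(\lambda)\,|\bc(\lambda)|^{-2}\,\d\lambda$ converges. That condition covers every use of the lemma in the paper.
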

\begin{proof}
See \cite[Section 4]{Br}.
\end{proof}

In view of Lemma \ref{lem3.1} and the relationship \eqref{e3.3} between $\psi(\sqrt{\Delta_n})$ and $\psi(\sqrt{L})$, we can derive the expression of $k_\psi$ as follows:
\begin{align}
&k_\psi(x,y)\label{e3.4}\\
=&\delta^{1/2}(x,y)\,\kappa_\psi(x,y)\notag\\
=&{2^{n-2}\Gamma((n+1)/2)\over\uppi^{(n+3)/2}}\delta^{1/2}(x,y)\int_0^\infty \psi(\lambda)\,\varphi_\lambda(R(x,y))\,|\bc(\lambda)|^{-2}\,\d \lambda.\notag
\end{align}
\subsection{Singular integrals on \texorpdfstring{$G$}{G}}
The following singular integral theorem and multiplier theorem are due to Hebisch.
\begin{lemma}\label{lem2.5}
Let $T$ be a linear operator which is bounded on $L^2(G)$ and admits a locally integrable Schwartz kernel $K$ with respect to $\d\rho$ off the diagonal which satisfies the \emph{H\"{o}rmonder smoothness condition}: for some $\varepsilon>0$ small enough,
\begin{align*}
\sup_{h,h'\in G}\int_{\{g:d(g,h),d(g,h')>\varepsilon d(h,h')\}}|K(g,h)-K(g,h')|\,\d \rho(g)<\infty.
\end{align*}
Then $T$ extends to a bounded operator from $L^1(G)$ to $L^{1,\infty}(G)$.
\end{lemma}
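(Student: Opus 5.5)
The proof will follow the classical Calderón--Zygmund scheme. The doubling covering lemmas are not available on $(G,d,\d\rho)$, so I would replace them by the Calderón--Zygmund decomposition of Hebisch and Steger for $ax+b$ groups with the right Haar measure. Recall the Hebisch--Steger \emph{admissible sets}: sets of the form $R=[a\e^{-r},a\e^{r}]\times Q$, where $Q\subset\R^n$ is a cube of side $\ell$. For $r<1$ one requires $\e^{2}ar\leq \ell<\e^{8}ar$; for $r\geq1$ one requires $a\e^{2r}\leq\ell<a\e^{8r}$. Two facts about these sets will be used.
\begin{enumerate}
\item[(i)] Every admissible set $R$ with parameter $r$ has $d$-diameter at most $C_0 r$. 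In the $x$-direction the hyperbolic length is $2r$. In the $y$-direction, at height $\sim a$ the hyperbolic length of a Euclidean segment of length $\ell$ is about $\log(\ell/a)\sim r$ when $r\geq1$, and about $\ell/a\sim r$ when $r<1$, by \eqref{e2.2}.
\item[(ii)] For every $\tau\geq1$, the enlargement $R^{\tau}=\{g:d(g,R)<\tau r\}$ satisfies $\rho(R^{\tau})\leq C_\tau\,\rho(R)$.
\end{enumerate}
Property (ii) is the point where the right Haar measure and the shape of $R$ matter; it is proved by direct computation with $\d\rho=x^{-1}\d x\d y$.

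Given $f\in L^1(G)\cap L^2(G)$ and $\alpha>0$, I will quote the Hebisch--Steger decomposition $f=g+\sum_i b_i$ with the following properties:
\begin{enumerate}
\item[(a)] $|g|\leq C\alpha$ and $\|g\|_{L^1}\leq C\|f\|_{L^1}$, hence $\|g\|_{L^2}^2\leq C\alpha\|f\|_{L^1}$;
\item[(b)] each $b_i$ is supported in an admissible set $R_i$, and $\int b_i\,\d\rho=0$;
\item[(c)] $\sum_i\|b_i\|_{L^1}\leq C\|f\|_{L^1}$ and $\sum_i\rho(R_i)\leq C\alpha^{-1}\|f\|_{L^1}$.
\end{enumerate}
The good part is then handled by $L^2$-boundedness of $T$ and Chebyshev:
\[
\rho(\{|Tg|>\alpha/2\})\leq C\alpha^{-2}\|g\|_{L^2}^2\leq C\alpha^{-1}\|f\|_{L^1}.
\]
For the bad part, fix $\tau=C_0(1+\varepsilon^{-1})$ and set $\Omega=\bigcup_i R_i^{\tau}$. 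By (ii) and (c), $\rho(\Omega)\leq C\alpha^{-1}\|f\|_{L^1}$. It therefore remains to show that
\[
\int_{G\setminus R_i^{\tau}}|Tb_i|\,\d\rho\leq C\|b_i\|_{L^1}\quad\text{for each } i.
\]
Summing over $i$ and using (c) then gives the required bound on $\rho(\{|T\sum_i b_i|>\alpha/2\}\setminus\Omega)$.

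For this estimate, pick a point $h_i\in R_i$. By the cancellation in (b),
\[
Tb_i(g)=\int_{R_i}\big(K(g,h)-K(g,h_i)\big)\,b_i(h)\,\d\rho(h).
\]
If $g\notin R_i^{\tau}$ and $h\in R_i$, then $d(g,h),d(g,h_i)\geq\tau r_i$, while $d(h,h_i)\leq C_0r_i$ by (i). Hence $d(g,h),d(g,h_i)>\varepsilon\,d(h,h_i)$ by the choice of $\tau$. Fubini and the Hörmander condition then give
\[
\int_{G\setminus R_i^{\tau}}|Tb_i|\,\d\rho\leq\sup_{h,h'}\int_{\{d(g,h),d(g,h')>\varepsilon d(h,h')\}}|K(g,h)-K(g,h')|\,\d\rho(g)\;\|b_i\|_{L^1}.
\]
Finally, a density argument extends $T$ from $L^1\cap L^2$ to all of $L^1(G)$.

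The main obstacle is the decomposition itself, namely constructing admissible sets that tile $G$ and are nested across scales so that a stopping-time argument on averages $\rho(R)^{-1}\int_R|f|\,\d\rho$ works. Together with the doubling of the enlargement in (ii), this is exactly where exponential volume growth breaks the Euclidean argument. I would not reprove this but cite Hebisch--Steger. The only new verification needed is the diameter bound (i), so that the Hörmander region with a small $\varepsilon$ is compatible with enlargements of bounded measure ratio.
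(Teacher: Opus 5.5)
Your plan matches what the paper's one-line proof cites: Hebisch--Steger's theorem is exactly this Calder\'on--Zygmund argument run on their decomposition for $G$. However, the step that controls the exceptional set fails.

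\textbf{The gap: property (ii) is false for every $\tau>1$.} Admissible sets only satisfy $\rho(R^*)\le C\rho(R)$ for the \emph{fixed} enlargement $R^*=\{g:d(g,R)<r\}$, i.e.\ $\tau=1$. To see that larger $\tau$ fails, take $r$ large and an admissible $R$ with $\ell\approx a\e^{2r}$, so $\rho(R)=2r\ell^n\approx 2r\,a^n\e^{2nr}$. Use
\[
\cosh d\big((x,y),(x',y')\big)=1+\frac{(x-x')^2+|y-y'|^2}{2xx'}.
\]
From this one checks that the ball of radius $\tau r$ about $(a\e^{r},y_0)\in R$, with $y_0$ the centre of $Q$, contains
\begin{align*}
\{(x,y):\ a\e^{(1+\tau)r-3}\le x\le a\e^{(1+\tau)r-2},\ |y-y_0|\le c\,a\e^{(1+\tau)r}\}
\end{align*}
for a small absolute $c>0$. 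This set has $\rho$-measure $\gtrsim a^n\e^{n(1+\tau)r}$. Hence
\[
\rho(R^\tau)/\rho(R)\gtrsim r^{-1}\e^{n(\tau-1)r}\to\infty .
\]
Your $\tau=C_0(1+\varepsilon^{-1})$ is always larger than $1$, so the bound $\rho(\Omega)\le C\alpha^{-1}\|f\|_{L^1}$ is unjustified. This is exactly where exponential growth bites; it is not a routine computation.

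Changing the constants $2,8$ in the definition of admissible sets does not cure this. Admitting a $\tau r$-enlargement forces $\ell\gtrsim a\e^{(1+\tau)r}$, which pushes the diameter up to about $2(2+\tau)r$. So the allowed enlargement always stays below half the diameter.

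\textbf{The repair: read the hypothesis in the right direction.} The region $\{d(g,h),d(g,h')>\varepsilon\,d(h,h')\}$ with $\varepsilon$ small is \emph{larger} than the Euclidean region $\{d(g,h)>2d(h,h')\}$. Your choice of $\tau$ treats it as $\{d(g,h)>\varepsilon^{-1}d(h,h')\}$. With the correct direction you only need $\tau>\varepsilon C_0$.

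So take $\Omega=\bigcup_i R_i^*$, for which the Hebisch--Steger decomposition gives $\sum_i\rho(R_i^*)\le C\alpha^{-1}\|f\|_{L^1}$. For $g\notin R_i^*$ and $h,h_i\in R_i$ you then have
\[
d(g,h),\,d(g,h_i)\ge r_i\ge d(h,h_i)/C_0>\varepsilon\,d(h,h_i)
\]
as soon as $\varepsilon<1/C_0$. That is precisely what ``$\varepsilon$ small enough'' in the lemma means: the smallness of $\varepsilon$ compensates for only being allowed to enlarge by $r_i$, a fixed fraction of the diameter.

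With this change the rest of your argument goes through and reproduces the cited proof. This includes the good part via $L^2$-boundedness and Chebyshev, and the cancellation plus Fubini estimate for each $b_i$.
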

\begin{proof}
This follows directly from \cite[Theorem 1.2, Remark 1.4 and Lemma 5.1]{HeSt}.
\end{proof}
\begin{lemma}[{\cite[Theorem 2.4]{HeSt}}]\label{lem2.6}
Suppose that $s_0>3/2$ and $s_\infty>\max\{3/2,(n+1)/2\}$. Let $\beta\in C^\infty(\R)$ with $\supp \beta\subset [1/4,4]$ such that
\begin{align*}
\sum_{j=-\infty}^\infty \beta(2^{-j}\lambda)\equiv 1\quad\mbox{for all }\lambda \in \R_+.
\end{align*}
If $m\in L^\infty(\R)$ satisfies
\begin{align*}
\sup_{0<\tau\leq 1}\|m(\tau\cdot)\,\beta(\cdot)\|_{L^2_{s_0}(\R)}+\sup_{\tau>1}\|m(\tau\cdot)\,\beta(\cdot)\|_{L^2_{s_\infty}(\R)}<\infty,
\end{align*}
then the operator $m(\sqrt{L})$ is bounded from $L^1(G)$ to $L^{1,\infty}(G)$.
\end{lemma}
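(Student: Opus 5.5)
The statement just above is Hebisch's multiplier theorem, quoted from \cite[Theorem 2.4]{HeSt}. To prove it, I would reduce it to the singular integral criterion of Lemma \ref{lem2.5}. The operator $m(\sqrt{L})$ is bounded on $L^2(G)$ because $m\in L^\infty$ and we use spectral calculus. So it suffices to show that its convolution kernel $k_m$, given by \eqref{e3.4}, satisfies the H\"ormander smoothness condition. Since the operator is right-convolution by $k_m$, the difference $K(g,h)-K(g,h')$ reduces by left invariance to a statement about $k_m(h^{-1}\cdot)-k_m(h'^{-1}\cdot)$. This becomes the usual integral estimate $\int_{d(g,e)>2\varepsilon^{-1}d(h,e)}|k_m(h^{-1}g)-k_m(g)|\,\d\rho(g)<\infty$, uniformly in $h$, with the modular factor $\delta$ tracked carefully.

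\textbf{Decomposition.} I would write $m=\sum_j m_j$ with $m_j(\lambda)=m(\lambda)\beta(2^{-j}\lambda)$, and let $k_j=k_{m_j}$. For $2^j\gtrsim 1$ (the local part), the relation \eqref{e3.3} and the explicit formula \eqref{e3.4} show that near the identity $G$ behaves like $\R^{n+1}$. The standard weighted $L^2$ argument then applies. By Plancherel for the spherical transform, with $|\bc(\lambda)|^{-2}\sim\lambda^n$ at infinity,
\begin{align*}
\int_G |k_j(g)|^2(1+2^jR(g))^{2s}\,\d\rho(g)\lesssim 2^{j(n+1)}\|m(2^j\cdot)\beta\|_{L^2_s}^2 .
\end{align*}
By Cauchy--Schwarz over balls, this gives $\int_{R>r}|k_j|\,\d\rho\lesssim (2^jr)^{-(s-(n+1)/2)}$ and $\int|\nabla k_j|\,\d\rho\lesssim 2^j$. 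These require $s>(n+1)/2$, which is exactly where $s_\infty>(n+1)/2$ enters. For $2^j\lesssim1$ (the global part), I would instead use that $|\bc(\lambda)|^{-2}\sim\lambda^2$ near $0$. This makes the kernel look one-dimensional in $\lambda$, hence the threshold $3/2$. The factor $\delta^{1/2}$ in \eqref{e3.4} combines with the spherical function decay $\varphi_\lambda(r)\lesssim(1+r)\e^{-nr/2}$ and Lemma \ref{lem2.1} to make $\int_{\SS^n}\delta^{1/2}\cdot(\sinh r)^n\e^{-nr/2}$ at most polynomial in $r$. This cancels the exponential volume growth of $\d\rho$ and gives weighted $L^1$ bounds $\int|k_j|(1+2^jR)^{\epsilon}\,\d\rho\lesssim1$.

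\textbf{Summation.} For a pair $h,g$ with $d(h,e)=r$, I would use the gradient bound for $2^j r\le1$ and the tail bound for $2^jr>1$, then sum the resulting geometric series. The Calder\'on--Zygmund decomposition of Lemma \ref{lem2.5} only needs this at small scales $r\lesssim1$; this is Hebisch's key observation, since on $G$ one may use Calder\'on--Zygmund sets of bounded size. So the main obstacle I expect is the low-frequency part $j\to-\infty$. There the kernels spread out at exponential volume scale, and one must show that the Hebisch--Steger smoothness condition holds at scale $r\le1$ uniformly in $j$. I would handle this by writing $k_j$ through the Abel transform, $\delta^{1/2}\kappa_j$ with $\kappa_j$ written via \eqref{e2.10}, which reduces the problem to a one-dimensional Fourier integral $\int m_j(\lambda)\e^{\i\lambda s}\,\d\lambda$. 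I would then apply the $L^2_{s_0}$ Sobolev bound with $s_0>3/2$ to get decay $(1+2^j|s|)^{-s_0}$. This is integrable against the extra factor of $s$ coming from $|\bc|^{-2}\sim\lambda^2$.
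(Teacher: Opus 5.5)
The paper gives no argument for this lemma; it is imported verbatim from \cite[Theorem 2.4]{HeSt}. Your reconstruction has the right overall architecture (dyadic pieces, a local part and a global part), but it has two genuine gaps.

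\textbf{Large scales and the missing gradient gain.} You claim that on $G$ the Calder\'on--Zygmund decomposition can use sets of bounded size, so that smoothness is needed only for $d(h,h')\lesssim 1$. This is false. Decomposing $\chi_{B(e,1)}$ at height $\alpha\to0$ forces disjoint sets meeting $B(e,1)$ whose total measure is $\gtrsim\alpha^{-1}$. Since $\rho(B(e,r))\approx\e^{nr}$, these sets have diameter $\gtrsim\log(1/\alpha)$. The Hebisch--Steger sets have arbitrary size, and Lemma \ref{lem2.5} takes the supremum over all pairs $h,h'$.

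In your summation scheme, the low-frequency pieces $2^j\le1$ therefore need the gain $\|Xk_j\|_{L^1}+\sum_i\|Y_ik_j\|_{L^1}\lesssim 2^j$. This is needed even when $d(h,h')\le1$, since otherwise the sum over $j\le0$ diverges. To use gradients at all, rewrite $\int|K(g,h)-K(g,hu)|\,\d\rho(g)$ via the involution as $\|k^*-k^*(\cdot\,u)\|_{L^1}$. This is a right translate that moves every point by exactly $d(h,hu)$. The left translates $k(h^{-1}\cdot)$ in your formulation instead move $g$ by $d(h^{-1}g,g)$, which $d(h,e)$ does not control.

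The gain does not follow from the weighted $L^1$ bounds you propose. Since $X\delta^{1/2}=-\frac{n}{2}\delta^{1/2}$, the term $-\frac{n}{2}k_j$ in $Xk_j$ has no gain at all. The gain appears only after cancelling this term against the derivative of the factor $\e^{-nR/2}$ in $\kappa_j$, using $1+XR=(x-\e^{-R})/\sinh R$. This quantity is small on the part of each sphere that carries most of the $\delta^{-1/2}$-mass. Your plan contains no such step.

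\textbf{The high-frequency tail.} Cauchy--Schwarz against $(1+2^jR)^{-2s}$ works only on $\{R\lesssim1\}$, because $\int_{R>1}(1+2^jR)^{-2s}\,\d\rho=\infty$ by exponential volume growth. So $\int_{R>r}|k_j|\,\d\rho\lesssim(2^jr)^{-(s-(n+1)/2)}$ does not follow, and the weighted $L^2$ inequality is not simply Plancherel either. For $R>1$ you must use polar coordinates:
$\int_{R>1}|k_j|\,\d\rho=\int_1^\infty|\kappa_j(r)|\big(\int_{\SS^n}\delta^{-1/2}(g(r,\omega))\,\d\omega\big)(\sinh r)^n\,\d r$.

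Lemma \ref{lem2.1} is sharp here: the inner integral is of exact order $r\,\e^{-nr/2}$. Moreover $\kappa_j(r)\approx\e^{-nr/2}\Psi_j(r)$, where $\Psi_j$ is the one-dimensional Fourier transform of $m_j/\bc(-\cdot)$. So the tail is $\approx\int_1^\infty r\,|\Psi_j(r)|\,\d r$. Bounding it uniformly in $j\ge0$, with the weight $(2^jr)^{\epsilon}$, uses $s_\infty>3/2$ as well as $s_\infty>(n+1)/2$. This is where the $3/2$ in $\max\{3/2,(n+1)/2\}$ comes from. Your account never uses it, and for $n=1$ it invokes only $s_\infty>1$, which signals the missing step.

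A related slip occurs in your global part. The spherical average must be of $\delta^{-1/2}$, not $\delta^{1/2}$, since $|k|\,\d\rho=\delta^{-1/2}|\kappa|\,\dV$. With that correction, the pointwise bound $|\varphi_\lambda(r)|\lesssim(1+r)\e^{-nr/2}$ still leaves a non-integrable factor $r^2$. The decay must therefore come from oscillation in $\lambda$, as in your final sentences.
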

\bigskip


\section{A useful result}\label{sec3}
In this section, we establish the following proposition, which serves as a key ingredient in the proof of the sufficiency part of Theorem \ref{thm1.1}.

\begin{proposition}\label{thm3.2}
Let $m$ be an even function with $m\in S^{-1}$. Then for $j=1,2,\dots,n$, $m(\sqrt{L})X^*$ and $m(\sqrt{L})Y_j^*$ are bounded from $L^1(G)$ to $L^{1,\infty}(G)$, where $X,Y_1,\dots,Y_j$ are the left-invariant vector fields given by \eqref{s1e1}.
\end{proposition}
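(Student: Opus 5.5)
We will apply the Hebisch singular integral criterion, Lemma \ref{lem2.5}, to $T=m(\sqrt{L})X^*$; the fields $Y_j^*$ are handled identically. First, $L^2$-boundedness. Since $L=X^*X+\sum_j Y_j^*Y_j$, we have $\|Xf\|_2^2+\sum_j\|Y_jf\|_2^2=\langle Lf,f\rangle=\|L^{1/2}f\|_2^2$. By duality, $X L^{-1/2}$ and hence $L^{-1/2}X^*$ are bounded on $L^2$. We then write $m(\sqrt{L})X^*=[m(\sqrt L)\sqrt L]\,L^{-1/2}X^*$, and $\lambda m(\lambda)$ is bounded because $m\in S^{-1}$. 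This gives $L^2$-boundedness.

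Next, the kernel. Let $k_m$ be the convolution kernel of $m(\sqrt L)$, given by \eqref{e3.4}. Since $X$ is left-invariant, $X^*$ applied before a right convolution operator moves onto the kernel: $m(\sqrt L)X^*f=f*(\widetilde X k_m)$ up to modular factors, where $\widetilde X$ is the corresponding right-invariant field (or, equivalently, the transpose acts in the $h$ variable of $K(g,h)=k_m(h^{-1}g)\delta(h)$). Thus $K(g,h)$ is a first derivative in $h$ of the kernel of $m(\sqrt L)$. In the Hörmander integral we will split into two cases.

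\emph{Case 1: $d(h,h')\le 1$.} Here we use the mean value theorem along a geodesic from $h$ to $h'$. This reduces the problem to bounding $\int_{d(g,h)>\varepsilon d(h,h')}|\nabla^2_h K|\,\d\rho(g)\lesssim d(h,h')^{-1}$, i.e.\ to size estimates for two derivatives of $k_m$, via left invariance.

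\emph{Case 2: $d(h,h')>1$.} Here it suffices to show $\int_{d(g,h)>\varepsilon}|\nabla_h k_m|\,\d\rho<\infty$, i.e.\ integrability of first derivatives of $k_m$ away from the identity.

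Everything thus reduces to pointwise estimates for $\delta^{1/2}\kappa_m$ and its first two derivatives, with $\kappa_m$ radial. For this we will use the spherical inversion formula in Lemma \ref{lem3.1}. Split $m=m\chi_0+m(1-\chi_0)$ with $\chi_0$ a cutoff near $0$.
\begin{itemize}
\item For the low-frequency part, the Paley--Wiener/Abel-transform approach gives $|\partial_r^k\kappa(r)|\lesssim (1+r)\e^{-nr/2}$ for $r\ge1$.
\item For the high-frequency part, a dyadic decomposition combined with the local Euclidean-type behaviour of $\varphi_\lambda$ gives $|\partial^k_r\kappa(r)|\lesssim r^{-n-k}$ for $r\le1$, since $m$ has order $-1$ in dimension $n+1$. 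It also gives rapid decay times $\e^{-nr/2}$ for $r\ge1$, obtained by integrating by parts in $\lambda$, which is allowed because $m$ is even and symbolic.
\end{itemize}
Derivatives of $\delta^{1/2}$ are harmless, since $X\delta^{1/2}=-\tfrac n2\delta^{1/2}$ and $Y_j\delta=0$. In polar coordinates \eqref{epol}, Lemma \ref{lem2.1} then gives
$$\int_{R>1}|\nabla k|\,\d\rho\lesssim\int_1^\infty (1+r)\e^{-nr/2}\,\e^{-nr/2}(\sinh r)^n\,r^{-N}\,\d r<\infty.$$
Here the factor $\e^{-nr/2}$ cancels the exponential volume growth $(\sinh r)^n$; the convergence relies on the extra polynomial decay $r^{-N}$, or on $r^{-2}$ obtained from the derivative. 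Near the origin, $|\nabla^2k|\lesssim r^{-n-1}$ integrates to $\lesssim d(h,h')^{-1}$ over $\{r>\varepsilon d(h,h')\}$.

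The main obstacle is the large-$r$ estimate. The bound $(1+r)\e^{-nr/2}$ on $\kappa_m$ alone gives a divergent integral $\int(1+r)\,\d r$. We therefore need genuine extra decay in $r$ for the \emph{derivative} of the kernel. We will try to get it by exploiting that $\partial_r\varphi_\lambda$ vanishes to order $\lambda^2$ at $\lambda=0$ while $|\bc(\lambda)|^{-2}\sim\lambda^2$. Integrating by parts in $\lambda$ in the representation $\varphi_\lambda(r)=\bc(\lambda)\e^{(\i\lambda-n/2)r}+\bc(-\lambda)\e^{(-\i\lambda-n/2)r}$ (Harish-Chandra expansion) should then give $r^{-2}$ decay. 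Evenness of $m$ is used so that $m\chi_0$ is smooth in $\lambda^2$ and no boundary terms arise.
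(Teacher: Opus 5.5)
Your direct route differs from the paper's but is viable. You verify the H\"ormander condition for the kernel $K(g,h)=X_h[\delta(h)\,k_m(h^{-1}g)]$ of $m(\sqrt L)X^*$ itself. The paper instead writes this kernel as $\widetilde k-q$, quotes \cite[(A.4)]{WaYa} for $X\overline m(\sqrt L)$, and checks the gradient bound only for the explicit remainder $q$; there only first derivatives of $d(g,\cdot)$ appear, whereas you also need the harmless bound $|\nabla_h^2 d(g,h)|\lesssim\coth d(g,h)$. Your $L^2$ step is correct. However, there is a genuine gap: the large-$r$ estimate is left as a tentative remark, and the argument you sketch for it would fail.

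Since $k_m=\delta^{1/2}\kappa_m(R)$, one has
\[
K(g,h)=\delta^{1/2}(gh)\Big[-\tfrac n2\,\kappa_m(d(g,h))+\kappa_m'(d(g,h))\,X_h d(g,h)\Big],
\]
so the term coming from $X\delta^{1/2}$ is \emph{not} harmless at infinity. The factor $1+r$ in Lemma~\ref{lem2.1} is sharp: $\int_{\SS^n}\delta^{-1/2}(g(r,\omega))\,\d\omega=\nu_n\varphi_0(r)\asymp(1+r)\e^{-nr/2}$. Hence Case 2, and the part $d(g,h)>1$ of Case 1, require $\int_1^\infty|\kappa_m^{(k)}(r)|\,(1+r)\,\e^{nr/2}\,\d r<\infty$ for $k=0,1,2$, including $k=0$. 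Your low-frequency bound $(1+r)\e^{-nr/2}$ is far from this. The proposed remedy fails on three counts:
\begin{itemize}
\item $\partial_r\varphi_\lambda$ does not vanish at $\lambda=0$; evenness only gives $\partial_r\varphi_\lambda-\varphi_0'=O(\lambda^2)$, and $\varphi_0'<0$.
\item A gain of $r^{-2}$ still leaves $\int^\infty \d r/r=\infty$.
\item It would only treat $\kappa_m'$, not $\kappa_m$ itself.
\end{itemize}

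The missing idea is the identity $|\bc(\lambda)|^{-2}\varphi_\lambda(r)=\bc(-\lambda)^{-1}\Phi_\lambda(r)+\bc(\lambda)^{-1}\Phi_{-\lambda}(r)$. Here $\bc(\mp\lambda)^{-1}$ is smooth on $\R$ (it vanishes at $0$) and is a symbol of order $n/2$. Moreover, $\e^{-(\i\lambda-n/2)r}\Phi_\lambda(r)$ is a symbol of order $0$ in $\lambda$, uniformly in $r\ge1$ together with its $r$-derivatives. Because $m$ is even, this gives $\kappa_m(r)=c\int_{\R}m(\lambda)\,\bc(-\lambda)^{-1}\Phi_\lambda(r)\,\d\lambda$, with an amplitude that is smooth on all of $\R$. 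Integrating by parts $N$ times (for large $\lambda$, via the standard estimates for Fourier transforms of symbols) then yields $|\partial_r^k\kappa_m(r)|\le C_{N,k}\,r^{-N}\e^{-nr/2}$ for $r\ge1$ and every $k$. No low/high-frequency split is needed. This is exactly what the paper extracts from Lemma~\ref{lem3.3}(i), $F^{(i)}_r(\lambda)=\e^{\i\lambda r}\e^{-nr/2}b_{n/2-1+i}(\lambda,r)$, combined with the symbol estimates; once you insert it, your argument closes.

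One smaller correction: near the diagonal, $\nabla_hK$ (two derivatives of $k_m$) is of size $r^{-n-2}$, which your own bound with $k=2$ gives. It is not $r^{-n-1}$; that size would only integrate to $\log(1/d(h,h'))$, not to $d(h,h')^{-1}$.
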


To show Proposition \ref{thm3.2}, we need the following convenient integral representation for the convolution kernels of the spectral multipliers.
\begin{lemma}\label{lem3.3}
Suppose $m$ is an even Borel function defined on $\R_+$. Then the convolution kenrel $k_m$ of $m(\sqrt{L})$ given by \eqref{e3.4}, has the following convenient integral representation:
\begin{align}\label{e3.5}
k_m(x,y)=\delta^{1/2}(x,y)\int_{\R} m(\lambda) \, F_{R(x,y)}(\lambda)\,\lambda\,\d \lambda
\end{align}
for
\begin{align*}
F_r(\lambda):=C_\ell\int_r^\infty D^\ell_{\mathrm{sh},s}\big[\e^{\i\lambda s}\big]
\,(\cosh s-\cosh r)^{\ell-n/2}\,\d s,
\end{align*}
where $\ell$ is any integer satisfying $\ell>n/2-1$ and we have written $D^\ell_{\mathrm{sh},s}$ for the $\ell$-th power of $D_{\mathrm{sh}}: g(s)\mapsto (\d/\d s)[g(s)/\sinh s]$ acting on the variable $s$.

In addition, for $i=0,1,2$, $F^{(i)}_r(\lambda):=\partial^i_r[F_r(\lambda)]$ satisfy the following asymptotic expansions:
\begin{asparaenum}[\rm (i)]
\item For $r>1$ and $i=0,1,2$,
\begin{align*}
F^{(i)}_r(\lambda)=\e^{\i\lambda r}\e^{-nr/2}b_{n/2-1+i}(\lambda,r),
\end{align*}
where $b_\sigma(\cdot,r)\in S^\sigma$ uniformly in $r>1$.
\item For $0<r\leq 1$ and $i=1,2$,
\begin{align*}
F^{(0)}_r=\e^{\i\lambda r}\times\begin{cases}
r^{-1/2}b_{-1/2}(\lambda,r)&\text{if }n=1,\\
r^{-n/2}b_{n/2-1}(\lambda,r)+r^{1-n}b_0(\lambda,r)&\text{if }n\geq 2,
\end{cases}
\end{align*}
\begin{align*}
F^{(i)}_r(\lambda)=\e^{\i\lambda r}\big(r^{-n/2+1}b_{n/2+i}(\lambda,r)+r^{-n+1-i}b_0(\lambda,r)\big),
\end{align*}
where $b_\sigma(\cdot,r)\in S^\sigma$ uniformly in $0<r\leq 1$.
\end{asparaenum}
\end{lemma}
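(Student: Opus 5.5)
This is Lemma \ref{lem3.3}, so the plan is to derive both the integral representation and the asymptotics from the Abel-transform (Harish-Chandra) inversion formula for real hyperbolic space.

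\textbf{Step 1: the representation \eqref{e3.5}.} Start from \eqref{e3.4}, i.e. $k_m=\delta^{1/2}\kappa_m$ with $\kappa_m$ given by Lemma \ref{lem3.1}. The radial kernel $\kappa_m(r)$ is the inverse Abel transform of the even Euclidean kernel $\check m(s)=\frac1{2\pi}\int_\R m(\lambda)\e^{\i\lambda s}\,\d\lambda$. In odd dimension the inversion reads $\kappa_m(r)=c\,(\sinh r)^{-1}\partial_r$ iterated $n/2$ times applied to $\check m$. In general one uses the fractional form
\begin{align*}
\kappa_m(r)=c\int_r^\infty D_{\mathrm{sh},s}^{\ell}\big[\partial_s\check m(s)\big](\cosh s-\cosh r)^{\ell-n/2}\,\d s
\end{align*}
for any integer $\ell>n/2-1$, which makes the exponent integrable at $s=r$. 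This should be obtained by composing the standard formula $\kappa(r)=c\int_r^\infty (-\tfrac{1}{\sinh s}\partial_s)^{\lceil n/2\rceil}\check m(s)\,(\cosh s-\cosh r)^{\lceil n/2\rceil-n/2-1}\sinh s\,\d s$ with extra integrations by parts (Riemann--Liouville identities in the variable $\cosh s$) to raise the number of derivatives to $\ell+1$.

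Since $m$ is even, $\partial_s\check m(s)=\frac{\i}{2\pi}\int m(\lambda)\lambda\e^{\i\lambda s}\,\d\lambda$. Interchanging integrals in the distributional sense, via a smooth cutoff in $\lambda$ and then a limit, gives \eqref{e3.5} with $F_r$ as stated, with constants absorbed into $C_\ell$. One can check the formula against Lemma \ref{lem3.1} by testing on $m=\varphi$-transforms, i.e. by computing the spherical transform of the right-hand side.

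\textbf{Step 2: expanding $D_{\mathrm{sh}}^\ell$.} Write
\[
D^\ell_{\mathrm{sh},s}[\e^{\i\lambda s}]=\e^{\i\lambda s}\sum_{k=0}^{\ell}(\i\lambda)^k a_{k}(s),
\]
where the $a_k$ are smooth on $(0,\infty)$ with $a_k(s)=O(\e^{-\ell s})$ for $s>1$ and $a_k(s)\sim s^{-2\ell+k}$ as $s\to0$.

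\textbf{Step 3: the region $r>1$.} Substitute $s=r+u$ and use
\[
\cosh s-\cosh r=\e^{r}\,\tfrac12(\e^{u}-1)\big(1-\e^{-2r-u}\big).
\]
Then
\[
F_r(\lambda)=\e^{\i\lambda r}\e^{r(\ell-n/2)}\int_0^\infty \e^{\i\lambda u}\,\Phi(u,r,\lambda)\,u^{\ell-n/2}\,\d u,
\]
with $\Phi$ a symbol of order $\ell$ in $\lambda$ carrying the factor $\e^{-\ell r}$. The one-sided Fourier transform of $u^{\ell-n/2}\times(\text{smooth, exponentially decaying})$ is a symbol of order $-(\ell-n/2+1)$, as in the Erdélyi expansion near the endpoint $u=0$. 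Combining gives $\e^{\i\lambda r}\e^{-nr/2}b_{n/2-1}$, uniformly in $r$.

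The $r$-derivatives are handled the same way. Differentiating under the integral after the substitution, each $\partial_r$ either hits $\e^{\i\lambda r}$, producing $\i\lambda$ and raising the order by one, or hits smooth factors, which does not raise the order. This yields the orders $n/2-1+i$.

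\textbf{Step 4: the region $0<r\le1$.} Split the $s$-integral into $[r,2r]\cup[2r,2]\cup[2,\infty)$ using a smooth partition.
\begin{itemize}
\item On $[r,2r]$, scale $s=r(1+v)$ and use $\cosh s-\cosh r\approx r^2v(1+v/2)$. This produces $r^{2\ell-n}\cdot r\cdot r^{-2\ell}$ times an oscillatory integral in $r\lambda$, which yields the $r^{-n/2}b_{n/2-1}$ term; the scaling in $r\lambda$ is converted using $|r\lambda|^{\sigma}\le\ldots$ bounds, which is where the mixed powers come from.
\item The far pieces give the $r^{1-n}b_0$ term, or $r^{-1/2}b_{-1/2}$ when $n=1$, by integrating by parts in $s$.
\item The derivatives are obtained as in Step 3. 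Each $\partial_r$ costs either $\lambda$ or $r^{-1}$, which gives the two stated terms.
\end{itemize}

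\textbf{Main obstacle.} I expect Step 4 to be the hardest, especially obtaining symbol estimates uniform in $r$ where the two scales $r$ and $\lambda^{-1}$ interact. I would first treat $\lambda r\lesssim1$ and $\lambda r\gtrsim1$ separately, and then glue the two regimes with a cutoff $\chi(\lambda r)$.
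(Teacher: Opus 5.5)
Your proposal is correct and is essentially a self-contained reconstruction of the argument that the paper cites from M\"uller--Thiele (their Propositions 5.2 and 5.7): you obtain the representation \eqref{e3.5} from the inverse Abel transform, written as a Weyl fractional derivative in $\cosh s$, and the symbol expansions from the substitution $s=r+u$ for $r>1$ and from the rescaling $s=r(1+v)$ plus integration by parts for $0<r\le 1$, with the cases $i=1,2$ handled by differentiating once the singular factor has been made $r$-independent, which is exactly the ``line-by-line modification'' the paper alludes to. Two small remarks: your ``standard'' fractional formula with exponent $\lceil n/2\rceil-n/2-1$ is valid only for odd $n$ (for even $n$ you should start from the local formula you quoted), and for $i=1,2$ your argument actually gives the sharper form $r^{-n/2}b_{n/2-1+i}+r^{-n+1-i}b_0$, which implies the one stated.
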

\begin{proof}
The case $i=0$ is \cite[Propositions 5.2 and 5.7]{MuTh}. To show the case $i=1,2$ one just follows line-by-line modifications of the proof of \cite[Propositions 5.2 and 5.7]{MuTh}.
\end{proof}
\begin{proof}[Proof of Proposition \ref{thm3.2}]
Let us first show the weak-type $(1,1)$ of the operator $m(\sqrt{L})X^*$. Denote the convolution kernels of $m(\sqrt{L})X^*$ and $X\overline m(\sqrt{L})$ by $k$ and $\widetilde{k}$, respectively. Then in view of \eqref{e3.5} and the fact that
\begin{align*}
XR(x,y)={1\over \sinh R(x,y)}{x^2-1-|y|^2\over 2x},
\end{align*}
we have
\begin{align*}
\widetilde{k}(x,y)=&X\!\left[\delta^{1/2}\int_{\R}\overline m(\lambda)\,F_{R(\cdot)}(\lambda)\,\lambda\,\d\lambda\right]\!(x,y)\\
=&-{n\over 2}\delta^{1/2}(x,y) \int_{\R}\overline m(\lambda)\,F_{R(x,y)}(\lambda)\,\lambda\,\d\lambda\\
&\quad+{\delta^{1/2}(x,y)\over\sinh R(x,y)}{x^2-1-|y|^2\over 2x}\int_{\R}\overline m(\lambda)\,F^{(1)}_{R(x,y)}(\lambda)\,\lambda\,\d\lambda.
\end{align*}
Hence
\begin{align}\label{eA.1}
k(x,y)=&\big(\widetilde{k}\big)^*(x,y)\\
=&-{n\over 2}\delta^{1/2}(x,y) \int_{\R}\overline m(\lambda)\,F_{R(x,y)}(\lambda)\,\lambda\,\d\lambda\notag\\
&\quad-{\delta^{1/2}(x,y)\over \sinh R(x,y)}{x^2-1+|y|^2\over 2x}\int_{\R}\overline m(\lambda)\,F^{(1)}_{R(x,y)}(\lambda)\,\lambda\,\d\lambda\notag\\
=&\widetilde{k}(x,y)-\underbrace{{\delta^{1/2}(x,y)\over \sinh R(x,y)}{x^2-1\over x}\int_{\R}\overline m(\lambda)\,F^{(1)}_{R(x,y)}(\lambda)\,\lambda\,\d\lambda}_{q(x,y)},\notag
\end{align}
where the superscript ``${}^*$'' stands for the $L^1$-isometric involution on $G$ given by
\begin{align*}
f^*(x,y)=\delta(x,y)\,\overline{f((x,y)^{-1})}=\delta(x,y)\,\overline{f(1/x,-y/x)}.
\end{align*}
\eqref{eA.1} indicates that
\begin{align*}
m(\sqrt{L})X^*(f)=Xm(\sqrt{L})(f)-f*q.
\end{align*}
However, by \cite[(A.4)]{WaYa} and Lemma \ref{lem2.5}, the operator $X\overline m(\sqrt{L})$ is of weak-type $(1,1)$. Therefore to show $m(\sqrt{L})X^*$ is bounded from $L^1(G)$ to $L^{1,\infty}(G)$ it suffices to show the operator given by convolution with the kernel $q$ is also of weak-type $(1,1)$. The idea is to verify that its Schwartz kernel $Q(g,h)=\delta(h)\,q(h^{-1}g)$ satisfies the H\"{o}rmonder smoothness condition in Lemma \ref{lem2.5}.

For two points $g=(x,y)$ and $h=(x',y')$ in $G$,
\begin{align*}
Q(g,h)&=\delta(h)\,q\!\left({x\over x'},{y-y'\over x'}\right)\\
&={\delta^{1/2}(gh)\over \sinh d(g,h)}{x^2-(x')^2\over x\,x'}\int_{\R}\overline m(\lambda)\,F^{(1)}_{d(g,h)}(\lambda)\,\lambda\,\d\lambda.
\end{align*}
Hence,
\begin{align*}
X[Q(g,\cdot)](h)=&-{n\over 2}{\delta^{1/2}(gh)\over \sinh d(g,h)}{x^2-(x')^2\over x\,x'}\int_{\R}\overline m(\lambda)\,F^{(1)}_{d(g,h)}(\lambda)\,\lambda\,\d\lambda\\
&\quad-\delta^{1/2}(gh){\cosh d(g,h)\over (\sinh d(g,h))^2}X[d(g,\cdot)](h)\\
&\qquad\qquad\times{x^2-(x')^2\over x\,x'}\int_{\R}\overline m(\lambda)\,F^{(1)}_{d(g,h)}(\lambda)\,\lambda\,\d\lambda\\
&\quad-{\delta^{1/2}(gh)\over \sinh d(g,h)}{x^2+(x')^2\over x\,x'}\int_{\R}\overline m(\lambda)\,F^{(1)}_{d(g,h)}(\lambda)\,\lambda\,\d\lambda\\
&\quad+{\delta^{1/2}(gh)\over \sinh d(g,h)}X[d(g,\cdot)](h)\\
&\qquad\qquad\times{x^2-(x')^2\over x\,x'}\int_{\R}\overline m(\lambda)\,F^{(2)}_{d(g,h)}(\lambda)\,\lambda\,\d\lambda,
\end{align*}
while for $j=1,2,\dots,n$
\begin{align*}
Y_j[Q(g,\cdot)](h)=&-\delta^{1/2}(gh){\cosh d(g,h)\over (\sinh d(g,h))^2}Y_j[d(g,\cdot)](h)\\
&\qquad\qquad\times{x^2-(x')^2\over x\,x'}\int_{\R}\overline m(\lambda)\,F^{(1)}_{d(g,h)}(\lambda)\,\lambda\,\d\lambda\\
&\quad+{\delta^{1/2}(gh)\over \sinh d(g,h)}Y_j[d(g,\cdot)](h)\\
&\qquad\qquad\times{x^2-(x')^2\over x\,x'}\int_{\R}\overline m(\lambda)\,F^{(2)}_{d(g,h)}(\lambda)\,\lambda\,\d\lambda.
\end{align*}
Note that $|X[d(g,\cdot)](h)|\leq 1$, $|Y_j[d(g,\cdot)](h)|\leq 1$ for $j=1,2,\dots,n$,
\begin{align*}
{x^2-(x')^2\over x\,x'}\leq C{d(g,h)\over 1+d(g,h)}\e^{d(g,h)},\quad{x^2+(x')^2\over x\,x'}\leq C \e^{d(g,h)}.
\end{align*}
Thus in view of Lemma \ref{lem3.3} and he classical estimates on Fourier transform of symbols \cite[p. 241]{St}, for all $N>0$,
\begin{align*}
|\nabla [Q(g,\cdot)](h)|_\g&\sim |X[Q(g,\cdot)](h)|+\sum_{j=1}^n|Y_j[Q(g,\cdot)](h)|\\
&\leq C_N\,\delta^{1/2}(gh)\times\begin{cases}
r^{-n-2}&\text{for }0<d(g,h)\leq 1,\\
r^{-N}\e^{-nr/2}&\text{for }d(g,h)>1.
\end{cases}
\end{align*}
Then the desired result that the operator given by convolution with the kernel $q$ is of weak-type $(1,1)$ follows from Lemma \eqref{lem2.5} together with a similar argument in the proof of \cite[Lemma 2.11]{WaYa}.

Now for $j=1,2,\dots,n$, we make line-by-line modifications of the above proof of $m(\sqrt{L})X^*$ to obtain that $m(\sqrt{L})Y_j^*$ are all bounded from $L^1(G)$ to $L^{1,\infty}(G)$, and is therefore omitted. See also \cite[Theorem 1.2]{Ma}.
The proof of Proposition~\ref{thm3.2} is complete.
\end{proof}


\section{Proof of Theorem \ref{thm1.1}}\label{sec5}
As is mentioned in Introduction, the proof of Theorem \ref{thm1.1} relies heavily on the explicit asymptotic expansions of $\varphi_\lambda(t)$ and $\varphi_\lambda'(t)$ for large $\lambda$, which we now state.
\begin{lemma}\label{prop4.1}
Suppose $t>0$ and $\lambda\geq \max\{1,t^{-1}\}$.
\begin{asparaenum}[\rm (a)]
\item For $\varphi_\lambda(t)$ we have
\begin{align*}
\varphi_\lambda(t)={2^{n/2}\Gamma((n+1)/2)\over\sqrt{\uppi}}(\sinh t)^{-n/2} \lambda^{-n/2}\cos\!\left(t\lambda-{n\uppi\over 4}\right)+\mathfrak{r}_t(\lambda),
\end{align*}
where
\begin{align*}
\mathfrak{r}_t(\lambda)=\begin{cases}
\e^{\i t\lambda}a_t(t\lambda)+\e^{-\i t\lambda}a_t(-t\lambda)&\mbox{if }0<t\leq 1 \mbox{ and }t\lambda\geq 1,\\
\e^{-nt/2}\left(\e^{\i t\lambda}a_t(\lambda)+\e^{-\i t\lambda}a_t(-\lambda)\right)&\mbox{if }t>1 \mbox{ and }\lambda\geq 1
\end{cases}
\end{align*}
for some $a_t\in S^{-1-n/2}$ uniformly in $t>0$.
\item For $\varphi_\lambda'(t)$ we have
\begin{align*}
\varphi_\lambda'(t)=-{2^{n/2}\Gamma((n+1)/2)\over\sqrt{\uppi}}(\sinh t)^{-n/2} \lambda^{1-n/2}\sin\!\left(t\lambda-{n\uppi\over 4}\right)+\mathfrak{s}_t(\lambda),
\end{align*}
where
\begin{align*}
\mathfrak{s}_t(\lambda)=\begin{cases}
t^{-1}\left(\e^{\i t\lambda}b_t(t\lambda)+\e^{-\i t\lambda}b_t(-t\lambda)\right)&\mbox{if }0<t\leq 1 \mbox{ and }t\lambda\geq 1,\\
\e^{-nt/2}\left(\e^{\i t\lambda}b_t(\lambda)+\e^{-\i t\lambda}b_t(-\lambda)\right)&\mbox{if }t>1 \mbox{ and }\lambda\geq 1
\end{cases}
\end{align*}
for some $b_t\in S^{-n/2}$ uniformly in $t>0$.
\end{asparaenum}
\end{lemma}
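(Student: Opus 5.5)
We will derive both expansions from the Laplace-type integral representation \eqref{e2.10}, treating it as an oscillatory integral in $s$ whose only non-smooth points are the endpoints $s=\pm t$, where the amplitude $(\cosh t-\cosh s)^{n/2-1}$ vanishes (or blows up) like a power. The plan is to split $\int_{-t}^t$ with a smooth partition of unity $1=\chi_+(s)+\chi_-(s)+\chi_0(s)$ on $[-t,t]$: $\chi_\pm$ localize to a neighbourhood of $s=\pm t$ of length comparable to $\min\{t,1\}$, and $\chi_0$ is supported away from the endpoints. On the support of $\chi_0$ the amplitude is smooth, so repeated integration by parts in $s$ produces decay faster than any power of $\lambda$ (for $t\le1$, faster than any power of $t\lambda$). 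These pieces are absorbed into $\mathfrak r_t$, since $\e^{\mp\i t\lambda}$ times a rapidly decaying function of $\lambda$ is still a symbol of arbitrary negative order.

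Near $s=t$ we substitute $s=t-\sigma$ and write
\[
\cosh t-\cosh(t-\sigma)=\sigma\sinh t\,\big(1+E_t(\sigma)\big),
\]
where $E_t(\sigma)=O(\sigma)$ is smooth, with the error normalized by $\coth t$ for large $t$ and by $1/t$ for small $t$. This turns the $\chi_+$ piece into
\[
\e^{\i\lambda t}(\sinh t)^{n/2-1}\int_0^\infty \e^{-\i\lambda\sigma}\sigma^{n/2-1}A_t(\sigma)\,\d\sigma
\]
with $A_t$ smooth and compactly supported, $A_t(0)=1$. The standard asymptotics of the one-sided Fourier transform of $\sigma_+^{a}A(\sigma)$ give $\Gamma(n/2)(\i\lambda)^{-n/2}$ plus a symbol of order $-n/2-1$. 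Combining this with the symmetric contribution from $s=-t$ and the prefactor $(\sinh t)^{1-n}$ yields the main term $(\sinh t)^{-n/2}\lambda^{-n/2}\cos(t\lambda-n\uppi/4)$; the constant matches because $2\Gamma(n/2)$ cancels the $\Gamma(n/2)$ in the denominator. The remainders carry $(\sinh t)^{-n/2}\lambda^{-n/2-1}$ times symbol factors. For $t>1$ this is $\e^{-nt/2}a_t(\lambda)$. For $t\le1$ we rescale $\sigma=t\tau$: the integral becomes a function of $t\lambda$ alone, up to smooth $t$-dependence, and $t^{-n/2}\lambda^{-n/2-1}$ is dominated by $(t\lambda)^{-n/2-1}$ as $t\le 1$, giving $a_t(t\lambda)$ with uniform symbol bounds.

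For part (b), rather than differentiating \eqref{e2.10} directly, which would produce boundary terms with singular amplitude when $n<2$, we use the rescaled form $\varphi_\lambda(t)=c\,(\sinh t)^{1-n}t\int_{-1}^1\e^{\i\lambda t u}(\cosh t-\cosh tu)^{n/2-1}\d u$ and differentiate in $t$. The derivative falling on $\e^{\i\lambda tu}$ produces an extra factor $\i\lambda u$, which raises the symbol order by one and gives the main term $-\lambda^{1-n/2}\sin(t\lambda-n\uppi/4)(\sinh t)^{-n/2}$. Derivatives falling on the prefactors or on the amplitude give terms of the same form as in part (a), times factors of order $1$ for large $t$ and $t^{-1}$ for small $t$. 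These are handled exactly as in (a) and account for the $t^{-1}b_t(t\lambda)$ and $\e^{-nt/2}b_t(\lambda)$ remainders, with $b_t\in S^{-n/2}$.

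The main obstacle will be the uniformity of the symbol estimates for the endpoint pieces over all $t>0$, and especially the transition regime $t\lambda\approx1$ with small $t$. There the rescaling must be done carefully, so that every $\lambda$-derivative of the remainder gains a factor $(1+t\lambda)^{-1}$ (respectively $(1+\lambda)^{-1}$), with constants independent of $t$. Alternatively, one could take these expansions from the known Harish-Chandra/Stanton–Tomas asymptotics of hyperbolic spherical functions, as \cite{MuTh} and \cite{WaYa} presumably do, and cite them directly.
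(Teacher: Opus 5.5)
The paper gives no argument for this lemma beyond citing \cite[Proposition 3.1]{WaYa}, so your direct derivation from \eqref{e2.10} is a different route. Most of it is sound. Writing $c_n$ for the constant in \eqref{e2.10}, the endpoint analysis gives the right constant, $2c_n\Gamma(n/2)=2^{n/2}\Gamma((n+1)/2)/\sqrt{\uppi}$. For $0<t\le 1$, the rescaling $s=tu$ makes the amplitude depend smoothly on $(t,u)\in[0,1]\times[-1,1]$, so the symbol bounds in $t\lambda$ are uniform. Differentiating the rescaled form in (b) also correctly avoids singular boundary terms.

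\textbf{The gap: the interior piece when $t>1$.} There $\supp\chi_0$ has length $\approx 2t$. Your justification (``$\e^{\mp\i t\lambda}$ times a rapidly decaying function of $\lambda$ is a symbol'') holds only for each fixed $t$. With $M_t(\lambda)=c_n(\sinh t)^{1-n}\int\chi_0(s)\e^{\i\lambda s}(\cosh t-\cosh s)^{n/2-1}\,\d s$, one has
\[
\partial_\lambda^k\big[\e^{-\i t\lambda}M_t(\lambda)\big]=c_n(\sinh t)^{1-n}\int\chi_0(s)\big(\i(s-t)\big)^k\e^{\i\lambda(s-t)}(\cosh t-\cosh s)^{n/2-1}\,\d s .
\]
Here $|s-t|$ ranges up to $2t$, and $\cosh t-\cosh s\approx\e^{t}$ on all of $\supp\chi_0$. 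Integrating by parts using only smoothness of the amplitude therefore yields $\lesssim t^{k+1}\e^{-nt/2}\lambda^{-N}$. That is not of the form $\e^{-nt/2}a_t$ with $a_t\in S^{-1-n/2}$ uniformly in $t>1$.

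This uniformity is essential: it is what makes \eqref{e5.10} an $O(t)$ bound, and hence what produces the growth $1+|t|$ in Theorem \ref{thm1.1}. Part (b) has the same problem. The regime $t\lambda\approx 1$ that you flagged as the main obstacle is harmless after your rescaling; the real difficulty is large $t$.

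\textbf{How to repair it.} Use the fact that for $t>1$ the amplitude has no interior $\lambda$-content. From $\cosh t-\cosh s=2\sinh\frac{t+s}{2}\sinh\frac{t-s}{2}$ one gets
\[
\partial_s\log(\cosh t-\cosh s)^{n/2-1}=-\frac{n-2}{4}\Big[\Big(\coth\frac{t-s}{2}-1\Big)-\Big(\coth\frac{t+s}{2}-1\Big)\Big].
\]
On $\supp\chi_0$ the two brackets, with all their $s$-derivatives, are $O(\e^{-(t-s)})$ and $O(\e^{-(t+s)})$ respectively.

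\begin{itemize}
\item After one integration by parts, the interior integral becomes a sum of terms carrying either $\chi_0'$ (supported within $O(1)$ of $\pm t$) or one of these exponentially localized factors.
\item Attach each term to the corresponding endpoint and factor out $\e^{\pm\i t\lambda}$. The powers $|s\mp t|^k$ produced by $\partial_\lambda^k$ are then absorbed by $\e^{-|t\mp s|}$ uniformly in $t$.
\item Further integrations by parts give arbitrary decay. So the interior contributes $\e^{-nt/2}\e^{\pm\i t\lambda}$ times symbols of any negative order, with $t$-independent constants.
\end{itemize}

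In (b), the factor $u=s/t$ coming from $\partial_t\e^{\i\lambda tu}$ has $s$-derivative $1/t$. This just reproduces the interior integral of (a) times $t^{-1}$, and is treated the same way.

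Alternatively, for $t\ge 1$ use the Harish-Chandra expansion $\varphi_\lambda(t)=\bc(\lambda)\Phi_\lambda(t)+\bc(-\lambda)\Phi_{-\lambda}(t)$, together with uniform symbol bounds for $\e^{(n/2-\i\lambda)t}\Phi_\lambda(t)$ (Stanton--Tomas). This has exactly the required structure and no interior term. You can keep your integral argument for $t\le 1$.
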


\begin{proof}
For the proof, we refer it to {\cite[Proposition 3.1]{WaYa}}.
\end{proof}

Further, we need the connection between the spherical function $\varphi_\lambda(t)$ and the spherical average operator.
\begin{lemma}\label{lem4.1}
Let $\d\sigma_t$ be the normalized spherical measure of radius $t$ on $G$, i.e.
\begin{align*}
\int_G f\,\d\sigma_t={1\over\nu_n}\int_{\SS^n}f(g(t,\omega))\,\d\omega,
\end{align*}
where $g(r,\omega)$ are the polar coordinates as in \eqref{polcor}. Then
\begin{align*}
\varphi_{\sqrt{L}}(t)(f)(x,y)={1\over\nu_n}\int_{\SS^n}\delta^{-1/2}(g(t,\omega))\, f((x,y)\cdot g(t,\omega))\,\d \omega.
\end{align*}
\end{lemma}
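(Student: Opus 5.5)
The plan is to reduce the identity to the Laplace--Beltrami side using \eqref{e3.3}, identify $\varphi_{\sqrt{\Delta_n}}(t)$ as the geodesic spherical mean operator, and then transfer back to $L$ using that $\delta$ is multiplicative. Taking $\psi(\lambda)=\varphi_\lambda(t)$ (for fixed $t$) in \eqref{e3.3} gives
\begin{align*}
\varphi_{\sqrt{L}}(t)(f)=\delta^{1/2}\,\varphi_{\sqrt{\Delta_n}}(t)(\delta^{-1/2}f).
\end{align*}
So it suffices to prove that for suitable $h$ (say $h\in C_c^\infty(G)$, then extend by density on $L^2(G,\dV)$)
\begin{align*}
\varphi_{\sqrt{\Delta_n}}(t)(h)(g)={1\over\nu_n}\int_{\SS^n}h(g\cdot g(t,\omega))\,\d\omega=:M_t h(g),
\end{align*}
i.e.\ that $\varphi_{\sqrt{\Delta_n}}(t)$ is the spherical mean operator.

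\emph{Step 1: rewrite radial convolutions as right translations.} In \eqref{con} the measure $\delta\,\d\rho=\dV$ is the left Haar measure. Substituting $h\mapsto gh$ gives $f*\kappa(g)=\int_G f(gh)\,\kappa(h^{-1})\,\dV(h)$. For a radial $\kappa$ we have $\kappa(h^{-1})=\kappa(h)$, because $d(h^{-1},e)=d(e,h)$ by left invariance of $d$. In polar coordinates \eqref{polcor} we have $\dV=(\sinh r)^n\,\d r\,\d\omega$; this is the density consistent with the normalisation in \eqref{e2.9}. Hence for radial $\kappa$
\begin{align*}
f*\kappa(g)=\int_0^\infty \kappa(r)\,(\sinh r)^n\int_{\SS^n}f(g\cdot g(r,\omega))\,\d\omega\,\d r=\nu_n\int_0^\infty \kappa(r)(\sinh r)^n\,M_rf(g)\,\d r.
\end{align*}

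\emph{Step 2: identify the multiplier.} Take radial approximate identities concentrating on the sphere of radius $t$: $\kappa_\epsilon(r)=\eta_\epsilon(r-t)/(\nu_n(\sinh r)^n)$ with $\eta_\epsilon$ a smooth bump of integral one. By Step 1, $f*\kappa_\epsilon\to M_tf$ pointwise, and in $L^2$ for $f\in C_c^\infty$, using continuity of $r\mapsto M_rf$. By Lemma \ref{lem3.1} together with the spherical inversion formula, convolution with a radial $L^1$ kernel $\kappa$ equals $\F(\kappa)(\sqrt{\Delta_n})$. By \eqref{e2.9} the multiplier here is $\F(\kappa_\epsilon)(\lambda)=\int\eta_\epsilon(r-t)\varphi_\lambda(r)\,\d r$. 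Since $|\varphi_\lambda|\le1$ for real $\lambda$ and $\varphi_\lambda$ is continuous in $r$, this converges boundedly to $\varphi_\lambda(t)$. By the spectral theorem (dominated convergence in the spectral measure), $\F(\kappa_\epsilon)(\sqrt{\Delta_n})f\to\varphi_{\sqrt{\Delta_n}}(t)f$ in $L^2$. Comparing the two limits gives $\varphi_{\sqrt{\Delta_n}}(t)=M_t$.

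\emph{Step 3: conclude.} Apply Step 2 with $h=\delta^{-1/2}f$. Since the modular function \eqref{mm} is multiplicative, $\delta(g\cdot k)=\delta(g)\delta(k)$, we get
\begin{align*}
\delta^{1/2}(g)\,M_t(\delta^{-1/2}f)(g)={1\over\nu_n}\int_{\SS^n}\delta^{-1/2}(g(t,\omega))\,f(g\cdot g(t,\omega))\,\d\omega,
\end{align*}
which is the claim, first on a dense class and then by continuity.

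I expect the main obstacle to be Step 2. Specifically, one must make sure the normalisations match: the $\nu_n$ and $(\sinh r)^n$ in \eqref{e2.9} against those in Lemma \ref{lem3.1}. One must also justify the passage from radial $L^1$ kernels to the singular measure $\d\sigma_t$. The approximation argument above, using boundedness of $\varphi_\lambda$ on the $L^2$-spectrum, is what I would use to handle both points.
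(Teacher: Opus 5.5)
Your proposal is correct and follows the route behind the paper's citation of \cite[(3.1)]{WaYa}. You pass to $\Delta_n$ through the intertwining identity \eqref{e3.3}, identify $\varphi_{\sqrt{\Delta_n}}(t)$ with right convolution by the normalized spherical measure $\d\sigma_t$ (whose spherical transform is $\varphi_\lambda(t)$), and return to $L$ using that $\delta$ is multiplicative; your approximate-identity argument in Step 2 supplies the justification, left to the reference, for moving from smooth radial kernels to the singular measure $\d\sigma_t$.
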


\begin{proof}
For the proof, we refer it to \cite[(3.1)]{WaYa}.
\end{proof}

\smallskip

We now proceed to the proof of Theorem \ref{thm1.1}. To show its sufficiency part, it suffices to show the following more general result.
\begin{proposition}\label{prop5.2}
Let $t\in\R$ and $m\in S^{-\alpha}$ be an even symbol.
\begin{asparaenum}[\rm (a)]
\item \label{prop5.2(a)}If $\alpha=n/2$, then
\begin{align*}
\|m(\sqrt{L})\,\cos(t\sqrt{L})\|_{L^1(G)\to L^{1,\infty}(G)}\leq C\,(1+|t|).
\end{align*}
for some constant $C>0$ independent of $t$.
\item \label{prop5.2(b)}If $\alpha=n/2-1$, then
\begin{align*}
\left\|m(\sqrt{L})\,{\sin(t\sqrt{L})\over \sqrt{L}}\right\|_{L^1(G)\to L^{1,\infty}(G)}\leq C\,(1+|t|). 
\end{align*}
for some constant $C>0$ independent of $t$.
\end{asparaenum}
\end{proposition}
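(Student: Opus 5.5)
The plan is to factor the wave propagator through the spherical average operator $\varphi_{\sqrt{L}}(t)$ and its $t$-derivative, following \cite{WaYa}, and then to use Proposition \ref{thm3.2} and Lemma \ref{lem2.6} for the order-zero and order-$(-1)$ remainders. By evenness in $t$ we assume $t>0$. We first dispose of the low-frequency part and the case $t\le 1$. Fix $\chi\in C_c^\infty$ even, equal to $1$ near $[-2,2]$. The multiplier $m(\lambda)\chi(\lambda)\cos(t\lambda)$ and its analogue with $\sin(t\lambda)/\lambda$ satisfy the hypotheses of Lemma \ref{lem2.6}, with norms bounded by $C(1+t)^{s_0}$. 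This is too crude for large $t$. For $t\le 1$, however, it is fine. It also shows that for $t\le1$ the high-frequency part can be handled by the Euclidean-type argument, since the symbols $m(\lambda)\e^{\pm\i t\lambda}$ lose nothing. So the real content is $t>1$, and for the low frequencies we instead use the pointwise kernel bounds from Lemma \ref{lem3.3}(i). These give $|k(x,y)|\le C\delta^{1/2}(1+R)\e^{-nR/2}$ uniformly for the compactly supported part, once we shift the contour by writing $\cos(t\lambda)=\frac12(\e^{\i t\lambda}+\e^{-\i t\lambda})$ and integrating against $F_R$. The $\delta^{1/2}\e^{-nR/2}$ decay together with Lemma \ref{lem2.1} gives an $L^1$ kernel bound of order $1+t$.

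For the high frequencies $\lambda\ge 1$ with $t>1$, we invert Lemma \ref{prop4.1}. Combining (a) and (b), we write
\begin{align*}
\e^{\i t\lambda}\lambda^{-n/2}=c_n(\sinh t)^{n/2}\e^{\i n\uppi/4}\Big(\varphi_\lambda(t)+\i\lambda^{-1}\varphi'_\lambda(t)\Big)+(\text{remainder}).
\end{align*}
Here the remainder is $\e^{-nt/2}(\sinh t)^{n/2}\big(\e^{\pm\i t\lambda}a(\pm\lambda)+\lambda^{-1}\e^{\pm \i t\lambda}b(\pm\lambda)\big)$, which is $O(1)$ times symbols of order $-1-n/2$. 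Consequently
\begin{align*}
m(\sqrt L)\cos(t\sqrt L)=(\sinh t)^{n/2}\big(\varphi_{\sqrt L}(t)\,m_1(\sqrt L)+\varphi'_{\sqrt L}(t)\,m_2(\sqrt L)\big)+E_t,
\end{align*}
with $m_1\in S^0$ and $m_2\in S^{-1}$ even. The remainder $E_t$ involves multipliers $\e^{\pm\i t\lambda}\tilde m(\lambda)$ with $\tilde m\in S^{-1}$. We iterate the same expansion once more on $E_t$, gaining another order of decay, until the leftover has order $<-n/2-1$. The kernel of such a leftover is then continuous and bounded via Lemma \ref{lem3.3}, and its $L^1$ norm is $\lesssim 1+t$.

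Next we treat the main terms. Lemma \ref{lem4.1} together with Lemma \ref{lem2.1} shows that $(\sinh t)^{n/2}\varphi_{\sqrt L}(t)$ is bounded on $L^1(G)$ with norm $\le C(\sinh t)^{n/2}(1+t)\e^{-nt/2}\le C(1+t)$. The factor $m_1(\sqrt L)$ is of weak type $(1,1)$ by Lemma \ref{lem2.6}. Since the multipliers commute, we apply the $L^1$-bounded operator first and the weak-type one afterwards, which preserves the bound. For the derivative term we differentiate the formula of Lemma \ref{lem4.1} in $t$. This produces $\partial_t[\delta^{-1/2}(g(t,\omega))]$, controlled by Lemma \ref{lem2.1}, plus $\delta^{-1/2}$ times the derivative of $f$ along $\partial_t g(t,\omega)$. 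The latter is a linear combination of $Xf$ and $Y_jf$ evaluated at the translated point, with coefficients bounded by $1$ since $g(\cdot,\omega)$ is a unit-speed geodesic. Moving these vector fields onto $m_2(\sqrt L)$ gives terms of the form (spherical average with $L^1$ norm $\le C(1+t)$) composed with $m_2(\sqrt L)X^*$ or $m_2(\sqrt L)Y_j^*$. These are of weak type $(1,1)$ by Proposition \ref{thm3.2}. Part (b) is identical, with $\sin(t\lambda)/\lambda$ and $m\in S^{1-n/2}$, so that $m(\lambda)/\lambda\in S^{-n/2}$.

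The main obstacle is the derivative term. The vector fields in $\partial_t f(h\cdot g(t,\omega))$ are evaluated in directions depending on $\omega$ and $t$. Writing them via left-invariant fields at $h g(t,\omega)$ requires pulling the $\omega$-dependent coefficients outside the multiplier. Because the coefficients depend only on $\omega$ and not on $h$, and left-invariant fields commute with right convolution, this works. It must, however, be checked carefully together with the adjoint convention of Proposition \ref{thm3.2}, and this is where I expect the bookkeeping to be delicate.
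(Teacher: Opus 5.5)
Your architecture matches the paper's: frequency splitting, factorization through $\varphi_{\sqrt L}(t)$ and $\varphi'_{\sqrt L}(t)$ via Lemma~\ref{prop4.1}, and Lemma~\ref{lem2.6} and Proposition~\ref{thm3.2} for the weak-type factors. However, the step you flag is a genuine gap, and the mechanism you propose for it is wrong.

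Differentiating Lemma~\ref{lem4.1} gives $\varphi'_{\sqrt L}(t)=A_t+B_{0,t}X+\sum_j B_{j,t}Y_j$, with the vector fields acting \emph{before} the weighted spherical averages. Composed with $m_2(\sqrt L)$ this yields one of two forms, and neither works:
\begin{itemize}
\item $B_{j,t}\circ\bigl(Y_jm_2(\sqrt L)\bigr)$: an $L^1$-bounded operator applied \emph{after} a weak-type one. This does not compose, since $L^1$-boundedness of $B_{j,t}$ says nothing about $L^{1,\infty}$.
\item $m_2(\sqrt L)B_{j,t}Y_j$: here $Y_j$ hits $f\in L^1$ directly.
\end{itemize}
The fields cannot be commuted across. $X$ does not commute with $L$ (note $[X,Y_j]=Y_j$). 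For left-invariant $V$ one has $V(f*k)=f*(Vk)$, not $(Vf)*k$. That the coefficients $v_j$ do not depend on the base point does not change this.

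The missing idea is the adjoint trick. Since $\varphi_\lambda(t)$ is real, $\varphi'_{\sqrt L}(t)$ is self-adjoint, so $\varphi'_{\sqrt L}(t)=A_t^*+X^*B_{0,t}^*+\sum_jY_j^*B_{j,t}^*$. By right invariance of $\d\rho$, each $B^*_{j,t}$ is again a weighted spherical average (over $g(t,\omega)^{-1}$) with $\|B^*_{j,t}\|_{1\to1}\le C(1+t)\e^{-nt/2}$. Then
$m_2(\sqrt L)\varphi'_{\sqrt L}(t)=m_2(\sqrt L)A_t^*+\bigl(m_2(\sqrt L)X^*\bigr)B^*_{0,t}+\sum_j\bigl(m_2(\sqrt L)Y_j^*\bigr)B^*_{j,t}$
has the weak-type factor last. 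This is precisely why Proposition~\ref{thm3.2} is stated for $m(\sqrt L)X^*$ rather than $Xm(\sqrt L)$.

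Your dismissal of the high frequencies for $t\le1$ is also incorrect: $m(\lambda)\e^{\pm\i t\lambda}$ with $m\in S^{-n/2}$ does not ``lose nothing''. For fixed $t>0$ and elliptic $m$, $\|m(\tau\cdot)\e^{\i t\tau\cdot}\beta\|_{L^2_{s}}\approx t^{s}\tau^{s-n/2}\to\infty$ as $\tau\to\infty$ when $s=s_\infty>(n+1)/2$. So Lemma~\ref{lem2.6} does not apply. This regime carries the whole critical-regularity difficulty, just as $t>1$ carries the growth.

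You must run the same factorization for $t\le1$, but split at $\lambda\sim1/t$ rather than $\lambda\sim1$:
\begin{itemize}
\item The small-time case of Lemma~\ref{prop4.1} requires $t\lambda\ge1$.
\item Below that threshold, $\eta(t\lambda)m(\lambda)\cos(t\lambda)\in S^0$ uniformly in $t$, so Lemma~\ref{lem2.6} applies.
\item Above it, the remainder (with amplitudes $\mathfrak a(t\lambda)\in S^{-1}$) is bounded on $L^1$ uniformly, cf.\ \cite[(4.7)]{WaYa}, and $\|\varphi_{\sqrt L}(t)\|_{1\to1}\le C$.
\end{itemize}

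Finally, a fixable slip. Your low-frequency bound $C\delta^{1/2}(1+R)\e^{-nR/2}$ gives no finite $L^1(\d\rho)$ norm; combined with Lemma~\ref{lem2.1} it becomes $\int^\infty(1+r)^2\,\d r$. What Lemma~\ref{lem3.3}(i) actually yields, after writing $\cos(t\lambda)=\frac12(\e^{\i t\lambda}+\e^{-\i t\lambda})$ (no contour shift is involved), is $C_N\delta^{1/2}\e^{-nR/2}(1+|R-t|)^{-N}$ for $R>1$. This integrates to $O(1+t)$. The same localization, not mere boundedness, is what controls your leftover remainder. The iteration is also unnecessary, since order $-n/2-1$ already suffices, cf.\ \cite[(4.3)]{WaYa}.
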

\begin{proof}
Without loss of generality we will assume that $t>0$. To prove (\ref{prop5.2(a)}), we suppose that $m\in S^{-n/2}$ is an even function, and consider two cases: $t>1$ and $0<t\leq 1$. 
\smallskip

\begin{asparaenum}[\bf {Case} 1:]
\item \label{thm3.3case1}$t>1$. Let $\eta\in C^\infty_c(\R)$ be an even function satisfying $\eta\equiv 1$ on $[-1,1]$. It follows from \cite[Theorem 8.1 (a)]{MuTh} that
\begin{align*}
\|\eta(\sqrt{L})\,m(\sqrt{L})\,\cos(t\sqrt{L})\|_{L^1(G)\to L^1(G)} \leq C\,t
\end{align*}
for some constant $C>0$ independent of $t$. Therefore it remains to show that
\begin{align}\label{e5.8}
\|(1-\eta(\sqrt{L}))\,m(\sqrt{L})\, \cos(t\sqrt{L})\|_{L^1(G)\to L^{1,\infty}(G)} \leq C\,t. 
\end{align}

To prove \eqref{e5.8}, the crucial observation is that in view of Lemma \ref{prop4.1}, on $\supp(1-\eta)$,
\begin{align}
\cos(t\lambda)&=\cos{n\uppi\over4}\,\cos\!\left(t\lambda-{n\uppi\over4}\right)-\sin{n\uppi\over4}\,\sin\!\left(t\lambda-{n\uppi\over4}\right)\label{e5.9}\\
&=c_1\,(\sinh t)^{n/2}\lambda^{n/2}\varphi_\lambda(t)+c_2\,(\sinh t)^{n/2}\lambda^{n/2-1}\varphi_\lambda'(t)\notag\\
&\qquad+\underbrace{\e^{\i t\lambda}\mathfrak{a}_{1,t}(\lambda)+\e^{-\i t\lambda}\mathfrak{a}_{2,t}(\lambda)}_{\mathfrak{r}_t(\lambda)} \nonumber
\end{align}
with $\mathfrak{a}_{1,t}(\lambda), \mathfrak{a}_{2,t}(\lambda)\in S^{-1}$ uniformly in $t\geq 1$. In addition, from \cite[(4.3)]{WaYa} we have
\begin{align}\label{e5.10}
\|(1-\eta(\sqrt{L}))\,m(\sqrt{L})\,\mathfrak{r}_t(\sqrt{L})\|_{L^1(G)\to L^1(G)}\leq C\, t.
\end{align}

Now we consider the two terms $(\sinh t)^{n/2}\lambda^{n/2}\varphi_\lambda(t) 
$ and $ (\sinh t)^{n/2}\lambda^{n/2-1}\varphi_\lambda'(t)$ in \eqref{e5.9}. If we set
\begin{align*}
\mathfrak{m}_{1,t}(\lambda):=(\sinh t)^{n/2}\mathfrak{m}_1(\lambda)\,\varphi_\lambda(t),\quad \mathfrak{m}_{2,t}(\lambda):=(\sinh t)^{n/2}\mathfrak{m}_2(\lambda)\,\varphi_\lambda'(t)
\end{align*}
with
\begin{align*}
\mathfrak{m}_1(\lambda):=(1-\eta(\lambda))\, m(\lambda)\,\lambda^{n/2}\in S^0
\end{align*}
and
\begin{align*}
\mathfrak{m}_2(\lambda):=(1-\eta(\lambda))\, m(\lambda)\,\lambda^{n/2-1}\in S^{-1},
\end{align*}
in view of \eqref{e5.9} and \eqref{e5.10}, the proof of \eqref{e5.8} reduces to showing that
\begin{align}\label{e5.11}
\|\mathfrak{m}_{1,t}(\sqrt{L})\|_{L^1(G)\to L^{1,\infty}(G)}\leq C\, t,\quad \|\mathfrak{m}_{2,t}(\sqrt{L})\|_{L^1(G)\to L^{1,\infty}(G)}\leq C\, t.
\end{align}

We now show the first estimate in \eqref{e5.11}. In view of Lemma \ref{lem2.6}, the operator $\mathfrak{m}_1(\sqrt{L})$ is bounded from $L^1(G)$ to $L^{1,\infty}(G)$. In addition, by Lemmas \ref{lem4.1} and \ref{lem2.1}, we have
\begin{align*}
\|(\sinh t)^{n/2}\varphi_{\sqrt{L}}(t)\|_{L^1(G)\to L^1(G)}\leq C\,\e^{nt/2}\int_{\SS^n}\delta^{-1/2}(g(t,\omega))\,\d\omega\leq C\, t.
\end{align*}
Therefore,
\begin{align*}
\|\mathfrak{m}_{1,t}(\sqrt{L})\|_{L^1(G)\to L^{1,\infty}(G)}&\leq \|\mathfrak{m}_1(\sqrt{L})\|_{L^1(G)\to L^{1,\infty}(G)}\\
&\qquad\times\|(\sinh t)^{n/2}\varphi_{\sqrt{L}}(t)\|_{L^1(G)\to L^1(G)}\leq C\, t,
\end{align*}
and the first estimate in \eqref{e5.11} is valid.

Now we verify the second estimate in \eqref{e5.11}. It follows from Lemma \ref{lem4.1} that
\begin{align*}
\varphi_{\sqrt{L}}'(t)=A_t+B_{0,t}X+\sum_{j=1}^nB_{j,t}Y_j,
\end{align*}
where
\begin{align}\label{e3.11.-2}
A_t(f)(g):={1\over\nu_n}\int_{\SS^n}\partial_t[\delta^{-1/2}(g(t,\omega))]\,f(g\cdot g(t,\omega))\,\d\omega,
\end{align}
and for each $j=0,1,\dots,n$,
\begin{align}\label{e3.11.-1}
B_{j,t}(f)(g):={1\over\nu_n}\int_{\SS^n}\delta^{-1/2}(g(t,\omega))\,f(g\cdot g(t,\omega))\,v_{j,g}(t,\omega)\,\d \omega,
\end{align}
and $v_{j,g}(\cdot,\omega)$ is the $j$-th component of the velocity of the geodesic $g\cdot g(\cdot,\omega)$, i.e.,
\begin{align*}
\partial_t[g\cdot g(t,\omega)]=v_{0,g}(t,\omega)\,X(g\cdot g(t,\omega))+\sum_{j=1}^nv_{j,g}(t,\omega)\,Y_j(g\cdot g(t,\omega)).
\end{align*}
This means that for all $g\in G$, $t>0$ and $\omega\in \SS^n$,
\begin{align}\label{e3.11.1}
\sum_{j=0}^n|v_{j,g}(t,\omega)|^2\equiv 1
\end{align}
In addition, it follows from \eqref{e2.10} that for $\lambda,t\in\R$, the soherical function $\varphi_\lambda(t)$ is real-valued, whence
\begin{align}\label{e3.11.0}
\varphi_{\sqrt{L}}'(t)=\left(\varphi_{\sqrt{L}}'(t)\right)^*=A_t^*+X^*B_{0,t}^*+\sum_{j=1}^nY_j^*B_{j,t}^*.
\end{align}
By \eqref{e3.11.-2} and \eqref{e3.11.-1},
\begin{align*}
A_t^*(f)(g)={1\over\nu_n}\int_{\SS^n}\partial_t[\delta^{-1/2}(g(t,\omega))]\,f(g\cdot (g(t,\omega))^{-1})\,\d\omega,
\end{align*}
while for $j=0,1,\dots n$,
\begin{align*}
B_{j,t}^*(f)(g)={1\over\nu_n}\int_{\SS^n}\delta^{-1/2}(g(t,\omega))\,f(g\cdot (g(t,\omega))^{-1})\,v_{j,g}(t,\omega)\,\d \omega.
\end{align*}
According to \eqref{e3.11.1}, Lemmas \ref{lem2.1} and \ref{lem4.1},
\begin{align}\label{e3.11.2}
\|A_r^*\|_{1\to1}+\sum_{j=0}^n\|B_{j,r}^*\|_{1\to1}\leq C\,(1+r)\,\e^{-nr/2}\quad \text{for }r>0.
\end{align}
In view of \eqref{e3.11.0}, we can rewrite
\begin{align}
\mathfrak{m}_{2,t}(\sqrt{L})&=(\sinh t)^{n/2}\mathfrak{m}_2(\sqrt{L})\,\varphi_{\sqrt{L}}'(t)\label{e3.11.3}\\
&=(\sinh t)^{n/2}\mathfrak{m}_2(\sqrt{L})\,A_t^*+(\sinh t)^{n/2}\mathfrak{m}_2(\sqrt{L})X^*\,B_{0,t}^*\notag\\
&\qquad+\sum_{j=1}^n(\sinh t)^{n/2}\mathfrak{m}_2(\sqrt{L})Y_j^*\,B_{j,t}^*.\notag
\end{align}
This, together with \eqref{e3.11.2}, Lemmas \ref{lem2.6} and \ref{thm3.2}, yields that
\begin{align*}
\|\mathfrak{m}_{2,t}(\sqrt{L})\|_{L^1(G)\to L^{1,\infty}(G)}\leq C\,t,
\end{align*}
and the second estimate in \eqref{e5.11} holds. From \eqref{e5.11}, we conclude the proof of (\ref{prop5.2(a)}) for $t>1$ in {\bf Case} \ref{thm3.3case1}.
\smallskip
\item \label{thm3.3case2}$0<t\leq 1$. Let $\eta\in C^\infty_c(\R)$ be the cutoff function as above. We set $m_{t,0}(\lambda):=\eta(t\lambda)\,m(\lambda)\,\cos(t\lambda)$. Note that $m_{t,0}\in S^{-n/2}\subset S^0$ uniformly in $0<t\leq 1$. We apply Lemma \ref{lem2.6} to obtain
\begin{align*}
\|m_{t,0}(\sqrt{L})\|_{L^1(G)\to L^{1,\infty}(G)}\leq C.
\end{align*}
Therefore it remains to show that
\begin{align}\label{e5.4}
\|(1-\eta(t\sqrt{L}))\,m(\sqrt{L})\,\cos(t\sqrt{L})\|_{L^1(G)\to L^{1,\infty}(G)}\leq C.
\end{align}

In view of Lemma \ref{prop4.1}, on $\supp (1-\eta(t\cdot))$,
\begin{align}
\cos(t\lambda)&=\cos{n\uppi\over4}\,\cos\!\left(t\lambda-{n\uppi\over4}\right)-\sin{n\uppi\over4}\,\sin\!\left(t\lambda-{n\uppi\over4}\right)\label{e5.5}\\
&=c_1\,(\sinh t)^{n/2}\lambda^{n/2}\varphi_\lambda(t)+c_2\,(\sinh t)^{n/2} \lambda^{n/2-1}\varphi_\lambda'(t)\notag\\
&\qquad+\underbrace{\e^{\i t\lambda}\mathfrak{a}_{1,t}(t\lambda)+\e^{-\i t\lambda}\mathfrak{a}_{2,t}(t\lambda)}_{\mathfrak{r}_t(\lambda)}\notag
\end{align}
with $\mathfrak{a}_{1,t}(\lambda), \mathfrak{a}_{2,t}(\lambda)\in S^{-1}$ uniformly in $0<t\leq 1$. In addition, from \cite[(4.7)]{WaYa} we have
\begin{align}\label{e5.6}
\|(1-\eta(t\sqrt{L}))\,m(\sqrt{L})\,\mathfrak{r}_t(\sqrt{L})\|_{L^1(G)\to L^1(G)}\leq C.
\end{align}

Now we consider the two terms $\lambda^{n/2}\varphi_\lambda(t)$ and $\lambda^{n/2-1}\varphi_\lambda'(t)$ in \eqref{e5.5}. If we set
\begin{align*}
\widetilde{\mathfrak{m}}_{1,t}(\lambda):=\mathfrak{m}_{1,t}(\lambda)\,\varphi_\lambda(t),\quad \widetilde{\mathfrak{m}}_{2,t}(\lambda):=\mathfrak{m}_{2,t}(\lambda)\,\varphi'_\lambda(t)
\end{align*}
with
\begin{align*}
\mathfrak{m}_{1,t}(\lambda):=(1-\eta(t\lambda))\, m(\lambda)\,\lambda^{n/2}\in S^0
\end{align*}
and
\begin{align*}
\mathfrak{m}_{2,t}(\lambda):=(1-\eta(t\lambda))\, m(\lambda)\,\lambda^{n/2-1}\in S^{-1}
\end{align*}
uniformly in $0<t\leq 1$, in view of \eqref{e5.5} and \eqref{e5.6}, the proof of \eqref{e5.4} reduces to showing that
\begin{align}\label{e5.7}
\|\widetilde{\mathfrak{m}}_{\ell,t}(\sqrt{L})\|_{L^1(G)\to L^{1,\infty}(G)}\leq C,\quad \ell=1,2.
\end{align} 

We now show \eqref{e5.7} for $\ell=1$. In view of Lemma \ref{lem2.6}, the operator $\mathfrak{m}_{1,t}(\sqrt{L})$ is bounded from $L^1(G)$ to $L^{1,\infty}(G)$ uniformly in $0<t\leq 1$. In addition, by Lemmas \ref{lem4.1} and \ref{lem2.1}, we have
\begin{align*}
\|\varphi_{\sqrt{L}}(t)\|_{L^1(G)\to L^1(G)}\leq C\int_{\SS^n}\delta^{-1/2}(g(t,\omega))\,\d\omega\leq C.
\end{align*}
Therefore,
\begin{align*}
\|\widetilde{\mathfrak{m}}_{1,t}(\sqrt{L})\|_{L^1(G)\to L^{1,\infty}(G)}&\leq \|\mathfrak{m}_{1,t}(\sqrt{L})\|_{L^1(G)\to L^{1,\infty}(G)}\\
&\qquad\times\|\varphi_{\sqrt{L}}(t)\|_{L^1(G)\to L^1(G)}\leq C,
\end{align*}
and \eqref{e5.7} holds for $\ell=1$.

To verify \eqref{e5.7} for $\ell=2$, in analogy to \eqref{e3.11.3}, we rewrite
\begin{align}\label{e5.7.1}
\widetilde{\mathfrak{m}}_{2,t}(\sqrt{L})=\mathfrak{m}_{2,t}(\sqrt{L})\,A_t^*+\mathfrak{m}_{2,t}(\sqrt{L})X^*\,B_{0,t}^*+\sum_{j=1}^n\mathfrak{m}_{2,t}(\sqrt{L})Y_j^*\,B_{j,t}^*.
\end{align}
This, together with \eqref{e3.11.2}, Lemmas \ref{lem2.6} and \ref{thm3.2}, yields that
\begin{align*}
\|\widetilde{\mathfrak{m}}_{2,t}(\sqrt{L})\|_{L^1(G)\to L^{1,\infty}(G)}\leq C,
\end{align*}
and \eqref{e5.7} holds for $\ell=2$. From \eqref{e5.7} we conclude the proof of (\ref{prop5.2(a)}) for $0<t\leq 1$ in \textbf{Case} \ref{thm3.3case2}. This ends the proof of (\ref{prop5.2(a)}).
\end{asparaenum}
\smallskip

The result (\ref{prop5.2(b)}) can be proved by a similar argument, and we skip the detail here. The proof of Proposition \ref{prop5.2} is concluded.
\end{proof}
\smallskip

Next we show the sharpness of the regularities indices in Theorem \ref{thm1.1}. In other words, we will show the following result.
\begin{proposition}
Let $t\in \R\setminus\{0\}$ and $\alpha\in\R$.
\begin{asparaenum}[\rm (a)]
\item \label{thm3.1(a)}If the operator $(\Id+L)^{-\alpha/2}\cos(t\sqrt{L})$ is of weak-type $(1,1)$, then we must have $\alpha\geq n/2$.
\item \label{thm3.1(b)}If the operator $(\Id+L)^{-\alpha/2}\sin(t\sqrt{L})/\sqrt{L}$ is of weak-type $(1,1)$, then we must have $\alpha\geq n/2-1$.
\end{asparaenum}
\end{proposition}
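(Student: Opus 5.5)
The plan is to argue by contradiction and reduce to the necessity statement already established in \cite[Theorem 1.1]{WaYa}, as the introduction indicates. The one gap to bridge is that \cite{WaYa} concerns $H^1\to L^1$ (or $L^p$) boundedness, whereas here we assume only $L^1\to L^{1,\infty}$. So we must locate a counterexample mechanism that is local and whose failure is visible already at the level of weak-type norms.

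Since the obstruction is local (small $t$-independent scales near the sphere of radius $|t|$), I would work with a family of test functions $f_\varepsilon$ concentrated in a geodesic ball $B(e,\varepsilon)$, with $\|f_\varepsilon\|_{L^1}\lesssim 1$ (say $f_\varepsilon=\rho(B(e,\varepsilon))^{-1}\chi_{B(e,\varepsilon)}$, or a smooth normalized bump). Write $(\Id+L)^{-\alpha/2}\cos(t\sqrt{L})f_\varepsilon=f_\varepsilon*k$, with $k$ the kernel given by \eqref{e3.4}. Using Lemma \ref{prop4.1} and the factorization \eqref{e5.9} (or directly the oscillatory integral representation of Lemma \ref{lem3.3}), the kernel for $\alpha<n/2$ behaves near the sphere $\{R=|t|\}$ like $\delta^{1/2}\,|R-|t||^{-(n/2-\alpha)-1/2}$ up to constants, just as in the Euclidean Peral--Miyachi analysis. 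This is a conormal singularity of order $\alpha-n/2$ across a hypersurface.

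For $\alpha<n/2$, the exponent $n/2-\alpha+1/2$ exceeds $1/2$, and the following comparison should give the contradiction. On the annulus $\{\varepsilon<|R(g)-|t||<c\}$, with $g$ in a fixed compact region where $\delta\sim1$, we have $|f_\varepsilon*k(g)|\gtrsim |R(g)-|t||^{-(n/2-\alpha)-1/2}$. Indeed, there $f_\varepsilon*k(g)$ is essentially $k(g)$ averaged over an $\varepsilon$-ball, and the leading-term phase is nearly constant in the normal direction once $\varepsilon\ll|R-|t||$; this is handled by taking a real part and restricting to angular sectors where the cosine term does not cancel. The distribution function of this lower bound is
\begin{align*}
\rho\{|f_\varepsilon*k|>\lambda\}\gtrsim \lambda^{-1/(n/2-\alpha+1/2)}\quad\text{for }1\lesssim\lambda\lesssim\varepsilon^{-(n/2-\alpha+1/2)}.
\end{align*}
Taking $\lambda=\varepsilon^{-(n/2-\alpha+1/2)}$ gives $\lambda\,\rho\{\cdots\}\gtrsim \lambda^{1-1/(n/2-\alpha+1/2)}$. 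The exponent is positive exactly when $n/2-\alpha+1/2>1$, i.e.\ $\alpha<n/2-1/2$. In that range we get $\to\infty$ as $\varepsilon\to0$, contradicting the weak-type bound.

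The delicate range $n/2-1/2\le\alpha<n/2$ is the main obstacle, since the crude distribution argument fails there. For it I would instead exploit the full $(n+1)$-dimensional geometry: choose $f_\varepsilon$ to be a sum of many separated bumps arranged along a piece of the sphere, in the spirit of Tao's Euclidean counterexample \cite{Ta2}. Alternatively, and more simply, I would appeal to the necessity argument in \cite[Theorem 1.1]{WaYa}. Its counterexample function there should satisfy $\|f\|_{L^1}\sim\|f\|_{H^1}$ (a normalized bump of mean zero), and its output lower bound is pointwise of size $|R-|t||^{-1}$ times a logarithmically divergent factor, essentially at the critical exponent. A pointwise lower bound $|Tf(g)|\gtrsim |R(g)-|t||^{-1}\log(1/|R(g)-|t||)$ on a set of measure $\sim1$ already gives infinite weak-$L^1$ norm, since $\lambda\,\rho\{|Tf|>\lambda\}\sim\log\lambda$. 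I expect to verify that the construction in \cite{WaYa} produces precisely such a pointwise bound, which yields (a). Part (b) is identical, using the expansion of $\varphi'_\lambda$ in Lemma \ref{prop4.1}(b) (the operator $\sin(t\sqrt L)/\sqrt L$ gains one derivative, shifting the threshold to $n/2-1$).
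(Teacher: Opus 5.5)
\textbf{Verdict: genuine gap.} The reduction to \cite{WaYa} is the right instinct, but the weak-type counterexample you build to get there does not hold together, and the decisive step is left to a guess.

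\textbf{The kernel exponent is wrong.} By Lemma \ref{prop4.1}(a) and $|\bc(\lambda)|^{-2}\sim\lambda^{n}$, the kernel of $(\Id+L)^{-\alpha/2}\cos(t\sqrt{L})$ near $\{R=|t|\}$ is $\delta^{1/2}(\sinh R)^{-n/2}$ times an oscillatory integral. Its amplitude is a symbol of order $n/2-\alpha$ and its phase is $(R-|t|)\lambda$. Hence it is singular like $|R-|t||^{-(n/2-\alpha)-1}$, not $|R-|t||^{-(n/2-\alpha)-1/2}$; you used the Euclidean exponent $(d+1)/2-\alpha$ with $d=n$ instead of $d=n+1$. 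Consequences:
\begin{itemize}
\item Your ``delicate range'' $n/2-1/2\le\alpha<n/2$ is an artifact of this error.
\item Your way of closing it is not a proof step. You guess that the counterexample in \cite{WaYa} yields a pointwise bound $|R-|t||^{-1}\log(1/|R-|t||)$, which is an unverified assumption about another paper.
\end{itemize}

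\textbf{The annulus lower bound is unjustified, even in your ``easy'' range.} You claim $|f_\varepsilon*k|\gtrsim|R-|t||^{-\beta}$ on $\{\varepsilon<|R-|t||<c\}$.
\begin{itemize}
\item $k$ is radial up to the factor $\delta^{1/2}$, so there are no angular sectors to select.
\item The one-sided coefficients of the singularity can vanish. For $n=2$, $\alpha=0$, radial solutions of the wave equation for $\Delta-1$ on $\mathbb{H}^3$ reduce to d'Alembert's equation. So the kernel of $\cos(t\sqrt{L})$ is supported on $\{R=|t|\}$, and $f_\varepsilon*k\equiv0$ on your annulus.
\item A bound $|f_\varepsilon*k|\gtrsim\varepsilon^{-(n/2-\alpha)-1}$ on a set of measure $\sim\varepsilon$ inside the $\varepsilon$-neighbourhood of the sphere would give a contradiction for every $\alpha<n/2$. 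However, it requires a non-degeneracy argument you have not supplied.
\end{itemize}

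\textbf{The missing idea is Marcinkiewicz interpolation.} You note yourself that \cite{WaYa} treats $L^p$, and interpolation closes the gap between weak type $(1,1)$ and those $L^p$ results with no counterexample needed. Suppose $T=(\Id+L)^{-\alpha/2}\cos(t\sqrt{L})$ is of weak type $(1,1)$ with $\alpha<n/2$.
\begin{enumerate}
\item For $\gamma>0$, $(\Id+L)^{-\gamma}$ has an integrable convolution kernel and commutes with $T$. So $T(\Id+L)^{-\gamma}$ is still of weak type $(1,1)$, and you may raise $\alpha$ to some $n/2-\varepsilon\in(0,n/2)$.
\item The multiplier is then bounded, so $T$ is bounded on $L^2(G)$.
\item Interpolation makes $T$ bounded on $L^p(G)$ for all $1<p\le2$.
\item The necessity part of \cite[Theorem 1.1]{WaYa} for $1<p<\infty$ forces $n/2-\varepsilon\ge n|1/p-1/2|=n/2-n/p'$. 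This fails once $p'>n/\varepsilon$.
\end{enumerate}
Part (b) is identical, with threshold $n|1/p-1/2|-1$; the raised $\alpha$ is $>-1$, so $\sin(t\lambda)/\lambda$ times the symbol stays bounded. This is the paper's argument, and it uses nothing about the fine structure of the kernel.
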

\begin{proof}
Here we only show (\ref{thm3.1(a)}), since (\ref{thm3.1(b)}) follows analogously.

We assume contrarily that $(\Id+L)^{-\alpha/2}\cos(t\sqrt{L})$ is of weak-type $(1,1)$ for some $\alpha=n/2-\varepsilon$ and $\varepsilon>0$. In view of Lemma \ref{lem3.3} and classical estimates on Fourier transforms of symbols \cite[p. 241]{St}, $(\Id+L)^{-\gamma}$ has an $L^1$-integrable convolution kernel whenever $\gamma>0$. Hence we may assume $\varepsilon<1/10$. Then it follows from interpolation with the trivial $L^2$-boundedness of $(\Id+L)^{-n/4+\varepsilon/2}\cos(t\sqrt{L})$ that this operator is bounded on $L^p(G)$ for all $1<p\leq 2$, in particular for $p$ such that $100n/\varepsilon<p'<\infty$. But this contradicts with the necessity part of \cite[Theorem 1.1]{WaYa}.
\end{proof}

Finally, let us show that the polynomial growth $1+|t|$ in the estimate \eqref{s1e5} is best possible. Without losing any generality, we fix $t>0$. To simplify calculations, we let $s=\log x$ and adopt $(s,y)$-coordinates. This time
\begin{align*}
\d\rho(s,y)=\d s\d y,\qquad L=-X^2-\sum_{j=1}^nY_j^2=-\partial_s^2-\e^{2s}\Delta_y
\end{align*}
for $\Delta_y$ being the standard Laplacian on $\R^n$, while the Riemannian metric \eqref{riem} becomes
\begin{align}\label{riem-sy}
\g=\d s^2+\e^{-2s}\d y^2.
\end{align}

We let $\eta\in C^\infty_c([-2,2])$ with $0\leq q_t\leq 1$ and $\eta\equiv 1$ on $[-1,1]$, and define
\begin{align*}
q_t(s,y):=\eta\!\left({s\over 2t+1}+3\right)\prod_{j=1}^n\eta(y_j).
\end{align*}
Then
\begin{align*}
\supp q_t\subset \widetilde{\mathcal{T}}_t:=\{-10t-5\leq s\leq -2t-1,\, |y_j|\leq 2\}
\end{align*}
and
\begin{align*}
q_t\equiv 1\quad\text{on }\mathcal{T}_t:=\{-8t-4\leq s\leq -4t-2,\, |y_j|\leq 1\}.
\end{align*}
\begin{lemma}\label{lem6.1}
For all $\beta\in\R$,
\begin{align*}
\|(\Id+L)^\beta(q_t)\|_{L^1(G)}\leq C_\beta\, (1+t).
\end{align*}
\end{lemma}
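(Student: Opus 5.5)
We need $\|(\Id+L)^\beta q_t\|_{L^1}\le C_\beta(1+t)$ for every real $\beta$. The plan is to treat $\beta=k\in\mathbb{N}$ first, by direct differentiation, and then to reach general $\beta$ by factoring $(\Id+L)^{\beta}=(\Id+L)^{\beta-k}(\Id+L)^{k}$ with $k\ge\beta$ an integer. For this we need $(\Id+L)^{-\gamma}$, $\gamma\ge0$, to be bounded on $L^1(G)$ with norm independent of $t$. The proof of the preceding proposition notes that for $\gamma>0$ this operator is convolution with an $L^1$ kernel, by Lemma \ref{lem3.3} and the symbol estimates of \cite[p. 241]{St}. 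Its $L^1$ operator norm is then at most the $L^1$ norm of that kernel, a constant depending only on $\gamma$; the case $\gamma=0$ is trivial. So it suffices to prove the bound for nonnegative integers $\beta=k$.

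\textbf{Integer powers.} Work in the $(s,y)$-coordinates, where $\d\rho=\d s\,\d y$ and $L=-\partial_s^2-\e^{2s}\Delta_y$. Expand $(\Id+L)^k q_t$ as a finite sum of terms
\[
c\,\e^{2js}\,\partial_s^{a}\Bigl[\eta\Bigl(\tfrac{s}{2t+1}+3\Bigr)\Bigr]\,\partial_y^{b}\prod_{i}\eta(y_i),\qquad j\le k,\ a\le 2k,\ |b|\le 2k.
\]
The factors $\e^{2s}$ produced by the $y$-derivatives have to be commuted past $\partial_s$, which yields further factors $\e^{2s}$ times constants; we track these. Each $s$-derivative of the first factor contributes $(2t+1)^{-a}\le 1$. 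On the support, $s\le -2t-1<0$, so $\e^{2js}\le 1$. Every term is therefore bounded by a constant $C_k$ times the indicator of $\widetilde{\mathcal T}_t$.

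\textbf{Conclusion.} Hence
\[
\|(\Id+L)^k q_t\|_{L^1}\le C_k\,\rho(\widetilde{\mathcal T}_t)= C_k\,(8t+4)\,4^n\le C_k'(1+t).
\]
Combining this with the $L^1$ boundedness of $(\Id+L)^{\beta-k}$ gives the lemma for all real $\beta$.

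The only delicate point is the uniformity in $t$ of the negative-power step. This holds because the operator $(\Id+L)^{-\gamma}$ does not depend on $t$, and the $L^1$ norm of its convolution kernel (for the convolution \eqref{con} with right Haar measure) controls the $L^1(G)$ operator norm. I expect to verify this via the kernel formula \eqref{e3.4}, which makes the kernel decay exponentially with the factor $\delta^{1/2}$ compensating growth. That ensures integrability against $\d\rho$.
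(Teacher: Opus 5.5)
Your proposal is correct and follows the paper's proof essentially verbatim. Both factor $(\Id+L)^\beta=(\Id+L)^{\beta-m}(\Id+L)^m$ with $m$ a nonnegative integer $\ge\beta$, bound $(\Id+L)^m q_t$ pointwise by a constant on $\widetilde{\mathcal{T}}_t$ (whose $\d\rho$-measure is $\sim 1+t$), and control $(\Id+L)^{\beta-m}$ on $L^1(G)$ through its integrable convolution kernel, obtained from Lemma~\ref{lem3.3} and the symbol estimates. Your bookkeeping of the commutators $\partial_s\e^{2js}=\e^{2js}(\partial_s+2j)$ is, if anything, slightly more careful than the paper's list of monomials $\e^{2ks}\partial_s^{2\ell}\Delta_y^k$.
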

\begin{proof}
Let $m=\max\{\lceil\beta\rceil,0\}$. Then
\begin{align}\label{eq:6.2}
\|(\Id+L)^\beta(q_t)\|_{L^1(G)}\leq \underbrace{\|(\Id+L)^{\beta-m}\|_{L^1(G)\to L^1(G)}}_{\mathbf I}\underbrace{\|(\Id+L)^m(q_t)\|_{L^1(G)}}_{\mathbf J}.
\end{align}
Since $(\Id+L)^m$ is a linear conbination of the following differential operators:
\begin{align*}
\e^{2ks}\partial_s^{2\ell}\Delta_y^k,\qquad k+\ell\leq m,
\end{align*}
and on $\supp q_t\subset \widetilde{\mathcal{T}}_t$
\begin{align*}
\e^{2ks}\partial_s^{2\ell}\Delta_y^k(q_T)\leq C_{k,\ell},
\end{align*}
We have
\begin{align*}
\mathbf J\leq C_m\,\rho(\widetilde{\mathcal{T}}_t)\leq C_\beta\, (1+t).
\end{align*}

On the other hand, if $\beta=0,1,2,\dots$ we have $\mathbf I=1$; otherwise by \eqref{e3.5}, Lemma \ref{lem3.3} and classical estimates on Fourier transforms
of symbols \cite[p. 241]{St}, $(\Id+L)^{\beta-m}$ has an $L^1$-integrable convolution kernel, whence $\mathbf I\leq C$. In view of \eqref{eq:6.2}, we obtain the desired estimate.
\end{proof}
In view of Lemma \ref{lem6.1}, the sharpness of the polynomial growth $1+|t|$ in the estimate \eqref{s1e5} follows readily from the following result.

\begin{proposition}\label{prop4.6}
Suppose $t>0$. 
\begin{asparaenum}[\rm (i)]
\item \label{prop6.2(i)}$\|\cos(t\sqrt{L})(q_t)\|_{L^{1,\infty}(G)}\geq C\, (1+t)$; 
\item \label{prop6.2(ii)}$\big\|{\sin(t\sqrt{L})\over\sqrt{L}}(q_t)\big\|_{L^{1,\infty}(G)}\geq Ct\,(1+t)$.
\end{asparaenum}
\end{proposition}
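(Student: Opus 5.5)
The plan is to use unit-speed propagation together with uniqueness. This lets us identify $u(t,\cdot)$ exactly on a large set, where the data $q_t$ looks like the constant function $1$ over a whole ball of radius $t$.

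\emph{Step 1: principal symbol.} In the $(s,y)$-coordinates, $L=-\partial_s^2-\e^{2s}\Delta_y$. Its principal symbol is $\xi_s^2+\e^{2s}|\xi_y|^2$, which is the dual metric of \eqref{riem-sy}. Hence the wave equation $u_{tt}+Lu=0$ has unit propagation speed with respect to $d$. That is, if two pairs of data agree on the ball $B(g,t)$, then the corresponding solutions agree at the point $g$ at time $t$. I would justify this by the standard local energy argument for $\partial_t^2+L$ on the complete manifold $(G,\g)$. The lower-order part of $L$ relative to the Laplace--Beltrami operator is the drift term $n\partial_s$. It does not affect the characteristic cones, though the energy estimate needs a Gronwall step to absorb it.

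\emph{Step 2: comparison with constant data.} We compare with the data $1$. The function $u\equiv 1$ solves \eqref{s1e3} with $(f,g)=(1,0)$, and $u=t$ solves it with $(f,g)=(0,1)$, since $L1=0$. These are not $L^2$ data, so I would argue purely locally: multiply by a cutoff equal to $1$ on a neighbourhood of the relevant ball, and compare via uniqueness for smooth compactly supported data. Combined with Step 1, this gives
\[
\cos(t\sqrt L)(q_t)(g)=1,\qquad \frac{\sin(t\sqrt L)}{\sqrt L}(q_t)(g)=t
\]
for every $g$ with $B(g,t)\subset\mathcal T_t$, using that $q_t\equiv1$ on $\mathcal T_t$.

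\emph{Step 3: a large set of good points.} Set
\[
S_t:=\{-7t-4\le s\le -5t-2,\ |y_j|\le 1/2\}.
\]
Let $(s,y)\in S_t$ and suppose $d((s',y'),(s,y))\le t$.
\begin{itemize}
\item Since $|\d s|\le|\cdot|_\g$, we get $|s'-s|\le t$, so $s'\in[-8t-4,-4t-2]$.
\item By the distance formula \eqref{e2.2} in $x=\e^s$ coordinates, $\cosh d=(x^2+x'^2+|y-y'|^2)/(2xx')$. Hence $|y-y'|^2\le 2xx'\cosh t\le 2x^2\e^{t}\cosh t$, which gives $|y-y'|\le C\e^{s+t}\le C\e^{-4t-2}$.
\end{itemize}
The constant here needs checking; I would shrink $S_t$ slightly if needed so that this bound is below $1/2$. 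Then $|y'_j|\le1$, so $B((s,y),t)\subset\mathcal T_t$. Moreover $\rho(S_t)=(2t+2)\cdot 1^n\simeq 1+t$.

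\emph{Step 4: conclusion.} For (i), take level $\lambda=1/2$:
\[
\|\cos(t\sqrt L)q_t\|_{L^{1,\infty}}\ge \tfrac12\,\rho(S_t)\ge C(1+t).
\]
For (ii), take level $\lambda=t/2$:
\[
\Big\|\frac{\sin(t\sqrt{L})}{\sqrt{L}}q_t\Big\|_{L^{1,\infty}}\ge \tfrac t2\,\rho(S_t)\ge Ct(1+t).
\]

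I expect the main obstacle to be Step 1–2. Specifically, one must rigorously justify finite propagation speed for the spectrally defined operators $\cos(t\sqrt L)$ and $\sin(t\sqrt L)/\sqrt L$ acting on $q_t\in C_c^\infty$, and then localize the comparison with non-decaying constant data. I would first try to transfer the known finite propagation speed for $\Delta_n$ through the conjugation \eqref{e3.3}. Multiplication by $\delta^{\pm1/2}$ is pointwise, so it preserves supports and hence kernel support in $\{d\le t\}$. Uniqueness would then handle the comparison.
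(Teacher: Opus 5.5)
Your proposal is correct and follows the paper's proof essentially step for step. The shared ingredients are unit-speed propagation (obtained, as in the paper, by transferring it through the intertwining relation \eqref{e3.3}), comparison with the constant solutions $u\equiv 1$ and $u\equiv t$ via uniqueness, the same set $S_t$ of measure $2t+2$, and the same levels $1/2$ and $t/2$ in the weak-$L^1$ quasinorm.

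The only cosmetic difference is how you verify $\mathcal N_t(S_t)\subset\mathcal T_t$. You use the closed-form distance $\cosh d=(x^2+x'^2+|y-y'|^2)/(2xx')$, whereas the paper integrates along unit-speed curves. Your constant does check out: $|y-y'|\le\sqrt2\,\e^{s+t}\le\sqrt2\,\e^{-2}<1/2$, so no shrinking of $S_t$ is needed.
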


\begin{proof}
By the intertwining relationship \eqref{e3.3} and the argument showing \cite[Theorem 2.4.2]{Sog14}, we know that the wave equation \eqref{s1e3} has \textit{unit speed propagation}. In other words, for
$$
(s,y)\in S_t:=\{-7t-4\leq s\leq -5t-2,\, |y_j|\leq 1/2\},
$$
the functions
$$
\cos(t\sqrt{L})(q_t)(s,y)\quad\text{and}\quad{\sin(t\sqrt{L})\over\sqrt{L}}(q_t)(s,y)
$$
depend only on $q_t$ restricted on the $t$-neighborhood $\mathcal N_t(S_t)$ of $S_t$.

We claim that $\mathcal N_t(S_t) \subset \mathcal T_t$. Indeed, suppose $(s,y)\in S_t$, and $(s(\tau),y(\tau))$ be any arc-lengthed geodesic ray starting from $(s,y)$. By \eqref{riem-sy}, this means that
\begin{align*}
|s'(\tau)|^2+\e^{-2s(\tau)}\sum_{j=1}^n|y_j'(\tau)|^2\equiv 1.
\end{align*}
Then for $0\leq t_0\leq t$,
\begin{align}\label{e6.3}
|s(t_0)-s|\leq \int_0^{t_0}|s'(\tau)|\,\d\tau\leq t,
\end{align}
and in view of \eqref{e6.3} and the fact that $s\leq -5t-2$,
\begin{align*}
|y_j(t_0)-y_j|\leq \int_0^{t_0} |y_j'(\tau)|\,\d \tau\leq t\sup_{0\leq\tau\leq t}\e^{s(\tau)}\leq t\,\e^{-4t-4}\leq {\e^{-5}\over 4}.
\end{align*}
Hence for $0\leq \tau\leq t$,
\begin{align*}
-8t-4\leq s(\tau)\leq -4t-2,\qquad |y_j(\tau)|\leq 1,
\end{align*}
whence $(s(\tau),y(\tau))\in \mathcal{T}_t$. The claim is verified.

To show Proposition \ref{prop4.6}\eqref{prop6.2(i)}, we note that $u\equiv 1$ is the solution to the Cauchy problem
\begin{align*}
{\partial^2u\over\partial t^2}+Lu=0,\quad u|_{t=0}\equiv 1,\quad\left.{\partial u\over\partial t}\right|_{t=0}\equiv0.
\end{align*}
As a consequence, by the unit speed propagation property and uniqueness of the solution (cf. \cite[pp. 12--13]{Sog14}), one has
\begin{align*}
\cos(t\sqrt{L})(q_t)\equiv 1 \qquad\text{on }S_t.
\end{align*}
Hence
\begin{align*}
\|\cos(t\sqrt{L})(q_t)\|_{L^{1,\infty}(G)}\geq {\rho(S_t)\over 2}\geq C\,(1+t).
\end{align*}
This, combined with the fact that $\|q_t\|_{L^1(G)}\leq C\,(1+t)$, yields Proposition \ref{prop4.6}\eqref{prop6.2(i)}.

It remains to verify Proposition \ref{prop4.6}\eqref{prop6.2(ii)}. Note that this time $u(t,\cdot)\equiv t$ is the solution to the Cauchy problem
\begin{align*}
{\partial^2u\over\partial t^2}+Lu=0,\quad u|_{t=0}\equiv 0,\quad\left.{\partial u\over\partial t}\right|_{t=0}\equiv1.
\end{align*}
As a consequence, by the unit speed propagation property and uniqueness of the solution (cf. \cite[pp. 12--13]{Sog14}) again, one has
\begin{align*}
{\sin(t\sqrt{L})\over \sqrt{L}}(q_t)\equiv t \qquad\text{on }S_t.
\end{align*}
Hence
\begin{align*}
\left\|{\sin(t\sqrt{L})\over \sqrt{L}}(q_t)\right\|_{L^{1,\infty}(G)}\geq {t\over 2}\rho(S_t)\geq Ct\,(1+t).
\end{align*}
Using the fact that $\|q_t\|_{L^1(G)}\leq C\,(1+t)$, we show Proposition \ref{prop4.6}\eqref{prop6.2(ii)}. 
This ends the proof of Proposition~\ref{prop4.6}.

The proof of 	Theorem \ref{thm1.1} is complete.

\end{proof}

\smallskip\noindent
\textbf{Acknowledgements.} The authors were supported by National Key R$\&$D Program of China 2022YFA1005700 and 
by NNSF of China (No. 12571111).


\bibliographystyle{plain}

\end{document}